\newtheorem{thm}{Theorem}[section]
\newtheorem{lem}[thm]{Lemma}
\newtheorem{prop}[thm]{Proposition}
\newtheorem{rem}[thm]{Remark}
\numberwithin{equation}{section}
\newcommand{\ep}{\varepsilon}
\newcommand{\Om}{\Omega}
\newcommand{\de}{\delta}
\newcommand{\om}{\omega}
\begin{document}

\title{Multiple Vortices for the Shallow Water Equation}
\author{Daomin Cao\thanks{dmcao@amt.ac.cn, Institute of Applied Mathematics,
AMSS, Chinese Academy of Sciences, Beijing 100190, P.R. China} ~~
~and~~ ~~Zhongyuan Liu\thanks{liuzy@amss.ac.cn, Academy of
Mathematics and Systems Science, Chinese Academy of Sciences,
Beijing 100190, P.R. China}}
 \date{}
 \maketitle

\begin{abstract}
In this paper, we construct  stationary classical solutions of the
shallow water equation with vanishing Froude number $Fr$ in the
so-called lake model.
 To this end we need to study solutions to the
 following semilinear elliptic problem
\[
\begin{cases}
-\ep^2\text{div}(\frac{\nabla u}{b} )=b( u-q\log\frac{1}{\ep})_+^{p},& \text{in}\; \Om,\\
u=0, &\text{on}\;\partial \Om,
\end{cases}
\]
for small $\ep>0$, where $p>1$, $\text{div}\left(\frac{\nabla
q}{b}\right)=0$ and $\Omega\subset\mathbb{R}^2$ is a smooth bounded
domain,.

We showed that if $\frac{q^2}{b}$ has $m$ strictly local
minimum(maximum) points $\bar z_i,\,i=1,\cdots,m$, then  there is a
stationary classical solution approximating stationary $m$ points
vortex solution of shallow water equations with vorticity
$\sum_{i=1}^m\frac{2\pi q(\bar z_i)}{b(\bar z_i)}$. Moreover,
strictly local minimum points of $\frac{q^2}{b}$ on the boundary can
also give vortex solutions for the shallow water equation. As a
further study we construct vortex pair solutions as well.

Existence and asymptotic behavior of single point non-vanishing
vortex solutions were studied by S. De Valeriola and J. Van
Schaftingen in \cite{DV}.
\end{abstract}

{\bf Mathematics Subject Classification:}\,\, 35J20,  35J40, 35J60,
35R45.

{\bf Keywords:} \,\, Shallow water equation; Free boundary; Stream
function; Vortex.

\section{Introduction and Main Results}

We consider fluid contained in a basin by a uniform gravitational
acceleration $g$ and fixed vertical lateral boundaries(i.e. no
sloping beaches). Suppose that $(x,y)$ is horizontal spatial
coordinate which is confined to a fixed bounded domain $\Omega$ with
boundary $\partial\Omega$.The vertical coordinate is chosen so that
the mean height of the fluid's free upper surface is at $z=0$. Let
$z=-b(x,y)$ give the fixed bottom topography, so $b$ is a strict
positive function over $\Omega$. Let $z=h(x,y)$ be the free upper
surface. We assume that both $b$ and $\partial\Omega$ vary over
distances $L$ which are large compared to the mean depth $B$, that
is, the ratio $\delta=\frac{B}{L}$ is small.

Let $\textbf{u}$ and $w$ denote the horizontal and vertical
components respectively of the fluid velocity. We will consider only
those motion for which $\textbf{u}$,$w$ and $h$ each vary in $(x,y)$
over distances $L$, in other words, we will make the long-wave
approximation. The "Froude number" is denoted as
$Fr=\frac{U}{\sqrt{gB}}$, where $U$ is the characteristic magnitude
of $\textbf{u}$. We will consider the case of small "Froude number"
$Fr$ and $h$ is small compared to $B$. In such cases, from
\cite{B,CHL1,CHL2,R}, the leading-order evolution of
$\textbf{u}(x,y,t)$ and $h(x,y,t)$ will be governed by equations
that have the non-dimensional form
\begin{equation}\label{1.1}
\begin{cases}
\partial_t\textbf{v}+(\textbf{v}\cdot\nabla)\textbf{v}=-\nabla h,\\
\text{div}(b\textbf{v})=0,
\end{cases}
\end{equation}
where $\nabla$ is the horizontal gradient. Since these equations
apply to a domain which is shallow compared to its width and whose
free surface exhibits negligible surface motion, they are called the
'lake' equations(see \cite{CHL2}, for instance).

The first equation in \eqref{1.1} can be rewritten in terms of the
vorticity $\om=\nabla\times \textbf{v}$ as
\[
\partial_t\textbf{v}+\om\times\textbf{v}=-\nabla\left(\frac{|\textbf{v}|^2}{2}+h\right).
\]
This model is analogous to the two-dimensional Euler equation for an
idea incompressible fluid and has been recently studied by many
authors. For instance, see \cite{B,CHL1,CHL2,R} and the references
cited therein.

Recently, De Valeriola and Van Schaftingen \cite{DV} studied the
desingularization of vortices for \eqref{1.1} with stream function
method, which consists in observing that if $\psi$ satisfies
\[
-\text{div}\left(\frac{\nabla\psi}{b}\right)=bf(\psi)
\]
for $f\in C^1(\mathbb{R})$, then $ \textbf{v}=\frac{\text{curl}
\,\psi}{b}\,\,\text{and}\,\,h=-F(\psi)-\frac{|\textbf{v}|^2}{2} $
with $F(s)=\int_0^sf(s)ds$ form a stationary solution to the shallow
water equation. Moreover, the velocity $\textbf{v}$ is irrotational
on the set where $f(\psi)=0$. It is easy to see that if $\psi_0$
satisfies $\text{div}\left(\frac{\nabla\psi_0}{b}\right)=0$, then
$\textbf{v}_0=\frac{\text{curl}\,\psi_0}{b}$ is an irrotaional
stationary solution of \eqref{1.1}. In \cite{DV}, they studied  the
asymptotics of solutions of
\begin{equation}\label{1.2}
\begin{cases}
-\ep^2\text{div}(\frac{\nabla \psi}{b})=b \psi_+^{p},& \text{in}\; \Om,\\
\psi=\psi_0\ln\frac{1}{\ep}, &\text{on}\;\partial \Om,
\end{cases}
\end{equation}
where $p>1$, $\Om\subset\mathbb{R}^2$ is smooth bounded domain.

To obtain their results, De Valeriola and Van Schaftingen
investigated the following problem

\begin{equation}\label{1.3}
\begin{cases}
-\ep^2\text{div}(\frac{\nabla u}{b} )=b( u-q_\ep)_+^{p},& \text{in}\; \Om,\\
u=0, &\text{on}\;\partial \Om,
\end{cases}
\end{equation}
where $p>1$, $q=-\psi_0$, $q_\ep=q\ln\frac{1}{\ep}$,
$\Om\subset\mathbb{R}^2$ is smooth bounded domain.

More precisely, they first obtained the existence of solutions by
using mountain pass lemma and studied the asymptotic behavior of
solutions by giving exact estimates to the upper and lower energy
bounds of the least energy solutions. As a consequence, they
obtained that the ``vortex core'' shrinks to a point $x_0$ which is
the minimum point of $\frac{q^2}{b}$. However, it is hard to apply
their method to construct multiple vortices  for \eqref{1.1}.

Motivated by \cite{DV}, our goal in this paper is to construct
multiple stationary vortices
 for shallow water equations. More specifically, we want to find
some high energy solutions whose ``vortex core'' consists of
multiple components which shrink to several distinct points in
$\bar\Om$ as $\ep\rightarrow0$ under some additional assumptions on
$\frac{\psi_0^2}{b}$.

Our main results in this paper can be stated as follows:
\begin{thm}\label{Th1.1}
Suppose that $\Om\subset\mathbb{R}^2$ is a smooth bounded domain.
Let $b\in C^1(\bar\Om)$,\,$\psi_0\in C^2(\bar{\Omega})$ be such that
$\rm{div}(\frac{\nabla \psi_0}{b})=0$ and let
$\textbf{v}_0=\rm{curl}\psi_0$. If $\inf_\Om b>0$ and
$\sup_\Om\psi_0<0$, then for any given strictly local
minimum(maximum) points $\bar z_1,\cdots,\bar z_m$ of
$\frac{\psi_0^2}{b}$, there exists $\ep_0>0$, such that for each
$\ep\in(0,\ep_0)$, there exists a family of solutions
$\textbf{v}_\ep\in C^1(\Om,\mathbb{R}^2)$ and $h_\ep\in C^1(\Om)$ of
\[
\begin{cases}
\rm{div}(b\textbf{v}_\ep)=0,& \text{in}\; \Om,\\
(\textbf{v}_\ep \cdot \nabla)\textbf{v}_\ep=-\nabla h_\ep, &
\text{in}\; \Om,\\
\textbf{v}_\ep\cdot\textbf{n}=\textbf{v}_0\cdot\textbf{n}\ln\frac{1}{\ep},
& \text{on}\; \partial\Om,
\end{cases}
\]
where $\textbf{n}$ is the unit outward normal. Furthermore the
corresponding vorticity $\om_\ep:=\rm{curl}\,\textbf{v}_\ep$
satisfying
\[
\text{supp}\,\om_\ep\subset \cup_{i=1}^m B(z_{i,\ep},C\ep)
\,\,\text{for}\,z_{i,\ep}\in\Om,\,\,i=1,\cdots,m
\]
and as $\ep\rightarrow0$,
\[
\int_\Om\om_\ep\rightarrow-\sum_{i=1}^m\frac{2\pi\psi_0(\bar
z_i)}{b(\bar z_i)},
\]
\[
(z_{1,\ep},\cdots,z_{m,\ep})\rightarrow(\bar z_1,\cdots,\bar z_m).
\]

\end{thm}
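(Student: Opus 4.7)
The plan is to recover $(\textbf{v}_\ep,h_\ep)$ from a stream function, thereby reducing the shallow-water system to \eqref{1.3}. With $q=-\psi_0>0$, a solution $u_\ep$ of \eqref{1.3} gives $\psi_\ep=u_\ep-q\log\tfrac{1}{\ep}$ solving \eqref{1.2} (using $\text{div}(\nabla q/b)=0$), and then $\textbf{v}_\ep=\text{curl}\,\psi_\ep/b$ together with $h_\ep=-\tfrac{1}{p+1}(u_\ep-q\log\tfrac{1}{\ep})_+^{p+1}-|\textbf{v}_\ep|^2/2$ solves the stated system with the prescribed normal flux on $\partial\Om$. Since the vorticity equals $\om_\ep=\ep^{-2}b(u_\ep-q\log\tfrac{1}{\ep})_+^p$, the concentration and mass assertions reduce to the requirement that the super-level set $\{u_\ep>q\log\tfrac{1}{\ep}\}$ consist of $m$ connected components of diameter $O(\ep)$ centred near the prescribed $\bar z_i$.

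For every configuration $\textbf{z}=(z_1,\dots,z_m)$ lying in a small neighbourhood of $(\bar z_1,\dots,\bar z_m)$ I would build an approximate solution $U_{\ep,\textbf{z}}=\sum_{i=1}^{m}U_{i,\ep}(\cdot;z_i)$. After freezing the weight at $b(z_i)$ and rescaling $x=z_i+\ep y$, each inner profile is modelled on the unique positive radial solution of $-\Delta V=b(z_i)^2 V_+^p$ on $\mathbb{R}^2$ whose support is a disk and whose far-field has the form $V(y)\sim -q(z_i)\log|y|$, matched to the outer function $\tfrac{2\pi q(z_i)}{b(z_i)}G_b(\cdot,z_i)$, where $G_b$ is the Green's function of $-\text{div}(\nabla\cdot/b)$ on $\Om$ with zero Dirichlet data. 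The matching fixes the support radius of each bubble and ensures $U_{\ep,\textbf{z}}$ solves \eqref{1.3} up to an error that is small in a suitable weighted norm.

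A Lyapunov--Schmidt reduction then produces the exact solution. Writing $u=U_{\ep,\textbf{z}}+\phi$ and projecting the equation onto the orthogonal complement of the approximate kernel spanned by $\partial U_{i,\ep}/\partial z_{i,k}$ ($i=1,\dots,m$, $k=1,2$), one shows the linearisation around $U_{\ep,\textbf{z}}$ is uniformly invertible on that complement (via non-degeneracy of the radial inner profile), so the contraction mapping principle gives a unique correction $\phi_{\ep,\textbf{z}}$ of small norm. The problem then reduces to finding critical points of
\[
\mathcal{I}_\ep(\textbf{z})=\left.\left(\frac{\ep^2}{2}\int_\Om\frac{|\nabla u|^2}{b}-\frac{1}{p+1}\int_\Om b(u-q\log\tfrac{1}{\ep})_+^{p+1}\right)\right|_{u=U_{\ep,\textbf{z}}+\phi_{\ep,\textbf{z}}},
\]
and an accurate expansion should give
\[
\mathcal{I}_\ep(\textbf{z})=A(\ep)-c_*\,\ep^2\log\tfrac{1}{\ep}\sum_{i=1}^{m}\frac{q(z_i)^2}{b(z_i)}+o\!\left(\ep^2\log\tfrac{1}{\ep}\right)
\]
uniformly in $\textbf{z}$, with $c_*>0$ and $A(\ep)$ independent of $\textbf{z}$. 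Hence each strict local minimum (resp.\ maximum) of $\psi_0^2/b$ at $\bar z_i$ produces a strict local maximum (resp.\ minimum) of $\mathcal{I}_\ep$ in a neighbourhood of $(\bar z_1,\dots,\bar z_m)$ that is attained in the interior of the parameter domain; it is therefore automatically critical and yields a genuine solution $u_\ep$ of \eqref{1.3}. The support and mass properties of $\om_\ep$ follow from the explicit inner profile of each bubble together with $\|\phi_{\ep,\textbf{z}}\|=o(1)$.

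The main obstacle I expect is deriving the expansion of $\mathcal{I}_\ep(\textbf{z})$ with enough precision to isolate the $\psi_0^2/b$ term: the weighted divergence structure forces one to track carefully how the inner bubble depends on $b(z_i)$ and $q(z_i)$, and how each bubble interacts with the regular part of $G_b(\cdot,z_i)$, rather than with the usual Green's function of $-\Delta$. Uniform invertibility of the linearisation in the weighted $H^1(\Om;b^{-1})$ norm is a secondary but genuine technical point, settled by transferring the non-degeneracy of the radial inner profile through the rescaling.
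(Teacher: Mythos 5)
Your proposal follows essentially the same route as the paper: the stream-function reduction of the shallow-water system to problem \eqref{1.3}, a multi-bubble ansatz whose inner profiles depend on $b(z_i)$ and $q(z_i)$ and are matched through the weighted Green's function, a Lyapunov--Schmidt reduction, and a reduced energy whose leading $\textbf{z}$-dependent term is a constant times $\sum_i q^2(z_i)/b(z_i)$, so that strict local extrema of $\psi_0^2/b$ yield interior critical points (the paper implements the inner profile via a Dirichlet problem on a large ball $B_R$ rather than a whole-space solution, since the bounded limit problem in $\mathbb{R}^2$ is trivial, but your log-growing matched profile is the same object). The only correction: the leading term of the reduced energy carries a positive sign, $+c_*\ep^2\log\tfrac1\ep\sum_i q^2(z_i)/b(z_i)$, so minima of $\psi_0^2/b$ give minima (not maxima) of $\mathcal{I}_\ep$ — which does not affect the existence of the critical point.
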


The next result shows that strictly local minimum points of
$\frac{\psi_0^2}{b}$ on the boundary $\partial\Omega$ can also give
vortex solutions for \eqref{1.1}.
\begin{thm}\label{Th1.2}
Suppose that $\Om\subset\mathbb{R}^2$ is a smooth bounded domain.
Let $b\in C^1(\bar\Om)$,\,$\psi_0\in C^2(\bar{\Omega})$ be such that
$\rm{div}(\frac{\nabla \psi_0}{b})=0$ and let
$\textbf{v}_0=\rm{curl}\psi_0$. If $\inf_\Om b>0$ and
$\sup_\Om\psi_0<0$, then, for any given strictly local minimum
points $\hat z_1,\cdots,\hat z_n$ of $\frac{\psi_0^2}{b}$ on the
boundary $\partial \Om$, there exists $\ep_0>0$, such that for each
$\ep\in(0,\ep_0)$, there exists a family of solutions
$\textbf{v}_\ep\in C^1(\Om,\mathbb{R}^2)$ and $h_\ep\in C^1(\Om)$ of
\[
\begin{cases}
\rm{div}(b\textbf{v}_\ep)=0,& \text{in}\; \Om,\\
(\textbf{v}_\ep \cdot \nabla)\textbf{v}_\ep=-\nabla h_\ep, &
\text{in}\; \Om,\\
\textbf{v}_\ep\cdot\textbf{n}=\textbf{v}_0\cdot\textbf{n}\ln\frac{1}{\ep},
& \text{on}\; \partial\Om,
\end{cases}
\]
where $\textbf{n}$ is the unit  outward normal. The corresponding
vorticity $\om_\ep:=\rm{curl}\,\textbf{v}_\ep$ satisfying
\[
\text{supp}\,\om_\ep\subset \cup_{i=1}^n B(z_{i,\ep},C\ep)
\,\,\text{for}\,z_{i,\ep}\in\Om,\,\,i=1,\cdots,n
\]
and as $\ep\rightarrow0$,
\[
\int_\Om\om_\ep\rightarrow-\sum_{i=1}^n\frac{2\pi\psi_0(\hat
z_i)}{b(\hat z_i)}.
\]
Moreover,
\[
|z_{i,\ep}-\hat z_i|\leq
C\left(\frac{\ln|\ln\ep|}{|\ln\ep|}\right),\,\,\text{dist}(z_{i,\ep},\partial\Om)\geq\frac{1}{|\ln\ep|^\alpha},
\]
where $\alpha$ is a positive constant.
\end{thm}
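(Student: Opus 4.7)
The plan is to follow the reduction scheme used for Theorem~\ref{Th1.1}, with the essential modification that the prescribed concentration points $\hat z_i$ now lie on $\partial\Omega$. The admissible vortex centres must therefore be allowed to migrate towards $\partial\Omega$; one introduces a shrinking safety layer into the configuration space and then proves that the minimising centres avoid it. Concretely, fix $\rho>0$ so that each $\hat z_i$ is the unique minimum of $\psi_0^2/b$ on $\overline{\partial\Omega\cap B_\rho(\hat z_i)}$, pick an exponent $\alpha>0$ (to be determined), and set
\[
\Lambda_\ep=\prod_{i=1}^n\Bigl\{\,z_i\in\Omega\cap B_\rho(\hat z_i):\operatorname{dist}(z_i,\partial\Omega)\ge |\ln\ep|^{-\alpha}\Bigr\}.
\]

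For each $z\in\Lambda_\ep$ I would construct an approximate solution $U_{\ep,z}$ of \eqref{1.3} by superposing $n$ $\ep$-scaled, Green's-function-corrected radial profiles centred at the $z_i$, and then run a Lyapunov--Schmidt reduction. Non-degeneracy (modulo translations) of the linearised limiting problem on $\mathbb{R}^2$ yields, via a contraction argument, a smooth family of small correctors $\phi_{\ep,z}$ such that $u_{\ep,z}=U_{\ep,z}+\phi_{\ep,z}$ solves the projected version of \eqref{1.3}. Expanding the energy then gives, schematically,
\[
I_\ep(z)=A_0(\ep)+c_1|\ln\ep|\sum_{i=1}^n\frac{\psi_0(z_i)^2}{b(z_i)}+\sum_{i=1}^n W_i(z_i)+o(1),
\]
where the Kirchhoff--Routh-type terms $W_i$ come from the regular part of the weighted Green's function of $-\operatorname{div}(\nabla\cdot/b)$ and produce a logarithmic barrier of size $-c_2(\hat z_i)\log\operatorname{dist}(z_i,\partial\Omega)$ as $z_i\to\partial\Omega$, with $c_2(\hat z_i)>0$ under the standing assumption $\sup_\Omega\psi_0<0$. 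A critical point of $I_\ep$ interior to $\Lambda_\ep$ then yields a genuine solution of \eqref{1.3}, from which $(\mathbf v_\ep,h_\ep)$ is recovered as in the interior case.

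The hardest step is showing that a minimiser $z_\ep$ of $I_\ep$ on $\Lambda_\ep$ does not lie on $\partial\Lambda_\ep$. Writing $z_i=\hat z_i+t_i\nu_i+\tau_i$ along the inward normal $\nu_i$ at $\hat z_i$, strict local minimality of $\hat z_i$ on $\partial\Omega$ combined with $C^2$-regularity gives
\[
\frac{\psi_0(z_i)^2}{b(z_i)}\ge \frac{\psi_0(\hat z_i)^2}{b(\hat z_i)}+c|\tau_i|^2-Ct_i,
\]
so moving inward by $t_i$ worsens the leading term by at most $C|\ln\ep|\,t_i$, whereas the boundary barrier contributes $\gtrsim |\log t_i|$. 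Balancing these two bounds simultaneously produces
\[
\operatorname{dist}(z_{i,\ep},\partial\Omega)\gtrsim |\ln\ep|^{-\alpha},\qquad |z_{i,\ep}-\hat z_i|\lesssim \frac{\log|\ln\ep|}{|\ln\ep|},
\]
which are exactly the estimates claimed in Theorem~\ref{Th1.2}. Once $z_\ep$ is shown to sit in the interior of $\Lambda_\ep$, the vorticity localisation $\operatorname{supp}\om_\ep\subset\bigcup_i B(z_{i,\ep},C\ep)$ and the circulation asymptotics $\int_\Omega\om_\ep\to -\sum_i 2\pi\psi_0(\hat z_i)/b(\hat z_i)$ follow from uniform decay estimates on $u_{\ep,z_\ep}$ outside the bubbles, exactly as in the proof of Theorem~\ref{Th1.1}.
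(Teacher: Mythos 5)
Your proposal follows essentially the same route as the paper: a Lyapunov--Schmidt reduction with projected radial profiles, a configuration set that allows the centres to approach $\partial\Omega$ down to distance $|\ln\ep|^{-\alpha}$, minimisation of the reduced energy there, and a comparison argument that balances the leading term $\propto q^2(z_i)/b(z_i)$ against the logarithmic barrier coming from the regular part of the Green's function to show the minimiser is interior and satisfies the two claimed estimates. The paper's proof (Theorem~\ref{Th1.5} together with Proposition~\ref{p5.2}) does exactly this, using a test configuration at distance $|\ln\ep|^{-2}$ from the boundary, so your argument is correct and not materially different.
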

\begin{rem}
If $\frac{\psi_0^2}{b}$ has   strictly local minimum points in $\Om$
and
 on the boundary $\partial\Om$, then there is, from
Theorem \ref{Th1.1} and \ref{Th1.2}, a stationary solution of the
shallow water equation such that its vorticity set shrinks to
corresponding  strictly local minimum points .
\end{rem}

It is worthwhile to  pointing out that although the structure of
shallow water equations is very analogous to that of two dimensional
Euler equations for an ideal incompressible fluid, the position of
vortex for \eqref{1.1} exhibits a striking difference with that of
the Euler equations. The position of vortex for Euler equation is
closely related to Kirchhoff-Routh function. The interested reader
can refer to \cite{CLW1,CLW2,SV,Lin} for more results on this
problem.

Theorem \ref{Th1.1} and Theorem \ref{Th1.2} are proved via the
following results concerning problem \eqref{1.3}:

\begin{thm}\label{Th1.4}
Suppose that $\Om\subset\mathbb{R}^2$ is a smooth bounded domain.
Let $b\in C^1(\bar\Om),\, q\in C^2(\bar\Om)$, $\inf_\Om b>0$ and
$\inf_\Om q>0$. Then, for any given strictly local minimum(maximum)
points $\bar z_1,\cdots,\bar z_m$ of $\frac{q^2}{b}$, there exists
$\ep_0>0$, such that for each $\ep\in(0,\ep_0)$, \eqref{1.3} has a
solution $u_\ep$, such that the set
$\Om_\ep=\{x:u_\ep-q\ln\frac{1}{\ep}>0\}$ has exactly $m$ components
$\Om_{\ep,i},\,i=1,\cdots,m$ and as $\ep\rightarrow0$, each
$\Om_{\ep,i}$ shrinks to the point $\bar z_i$.
\end{thm}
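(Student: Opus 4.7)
The plan is to prove Theorem \ref{Th1.4} by a Lyapunov--Schmidt type finite-dimensional reduction built on the single-vortex analysis of \cite{DV}. For each parameter $\mathbf{z} = (z_1, \ldots, z_m)$ ranging in a small product neighborhood of $(\bar z_1, \ldots, \bar z_m)$ in $\Om^m$, I will construct a multi-bump approximate solution $U_{\ep,\mathbf{z}}$, perturb it, project the equation onto a small finite-dimensional space, and then locate the true solution by finding critical points of a reduced functional on $\Om^m$.

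To build the approximate solution, near each $z_i$ the substitution $y = (x - z_i)/\ep$ together with freezing $b$ at $z_i$ transforms \eqref{1.3} into a limit problem of the form $-\Delta W = b(z_i)^2 (W - s_i)_+^p$ on $\mathbb{R}^2$, which admits a compactly supported radial profile $W_i$ once the level $s_i$ is fixed. The single-vortex profiles are glued together and corrected by a harmonic term built from the Green's function $G_b$ of $-\text{div}(\nabla\cdot/b)$ on $\Om$ with Dirichlet boundary data, so that the resulting $U_{\ep,\mathbf{z}}$ has positivity set $\{U_{\ep,\mathbf{z}} > q_\ep\}$ equal to a union of $m$ disks of radius $O(\ep)$ centered at the $z_i$ and satisfies \eqref{1.3} modulo a small residual.

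Writing $u = U_{\ep,\mathbf{z}} + \phi$, the linearized operator
\[
L_\ep \phi = -\ep^2 \text{div}\Bigl(\frac{\nabla \phi}{b}\Bigr) - p\, b\, (U_{\ep,\mathbf{z}} - q_\ep)_+^{p-1}\phi
\]
has an approximate kernel spanned by the translation modes $\partial_{z_i^k} U_{\ep,\mathbf{z}}$ for $i=1,\ldots,m$, $k=1,2$. Projecting the equation onto the orthogonal complement of this kernel in a suitably weighted Sobolev norm and solving by a contraction mapping argument will yield $\phi = \phi_\ep(\mathbf{z})$ with small norm. The remaining task is to kill the kernel components, which is equivalent to finding a critical point of the reduced functional $\Phi_\ep(\mathbf{z}) := I_\ep(U_{\ep,\mathbf{z}} + \phi_\ep(\mathbf{z}))$, where $I_\ep$ is the action functional associated with \eqref{1.3}. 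A careful energy expansion should show that, modulo an additive $\mathbf{z}$-independent term of order $\ln(1/\ep)$, the functional $\Phi_\ep(\mathbf{z})$ depends on $\mathbf{z}$ at leading order through $\sum_{i=1}^m q^2(z_i)/b(z_i)$ (up to a sign and an $\ep$-dependent multiplicative factor). Since each $\bar z_i$ is a strict local minimum or maximum of $q^2/b$ and this leading $\mathbf{z}$-dependence is separable in $i$, $\Phi_\ep$ will attain a strict local extremum at some interior point $\mathbf{z}_\ep$ with $\mathbf{z}_\ep \to (\bar z_1, \ldots, \bar z_m)$, and $u_\ep = U_{\ep,\mathbf{z}_\ep} + \phi_\ep(\mathbf{z}_\ep)$ will be the desired solution. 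The $m$-component structure of $\{u_\ep > q_\ep\}$ follows because $\phi_\ep$ is a small $L^\infty$ perturbation of $U_{\ep,\mathbf{z}_\ep}$ whose $m$ bumps are uniformly separated.

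The main technical obstacle will be the low regularity of the nonlinearity $t \mapsto t_+^p$ across the free boundary. The multiplier $p\, b\, (U_{\ep,\mathbf{z}} - q_\ep)_+^{p-1}$ in $L_\ep$ is only H\"older continuous, and the profile $W_i$ is merely $C^{1,\alpha}$ at its own free boundary, so the standard Lyapunov--Schmidt framework must be adapted: one needs weighted norms in which $L_\ep$ is uniformly invertible orthogonally to its approximate kernel, and the translation modes must be shown to be accurate enough approximate eigenfunctions. A second delicate point is to make the expansion of $\Phi_\ep$ sharp enough to identify $q^2(z_i)/b(z_i)$, rather than some other combination, as the leading $\mathbf{z}$-dependent term; this will require precise computation of the interaction between each bump and the regular part of $G_b$, together with careful bookkeeping of the $\ln(1/\ep)$ contributions coming from $q_\ep = q \ln(1/\ep)$.
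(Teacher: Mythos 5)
Your proposal follows essentially the same route as the paper's proof: a Lyapunov--Schmidt reduction around a sum of projected single-vortex profiles, with the translation modes $\partial_{z_i}U_{\ep,\mathbf{z}}$ as approximate kernel, a contraction argument for the orthogonal correction, and a reduced energy whose leading $\mathbf{z}$-dependent term is (a positive multiple of) $\sum_i q^2(z_i)/b(z_i)$, so that strict local extrema of $q^2/b$ yield interior critical points. One correction to your setup: the entire limit problem $-\Delta W = b(z_i)^2(W-s_i)_+^p$ on $\mathbb{R}^2$ has \emph{no} bounded nontrivial solution (the paper stresses exactly this point), so the building block cannot be an entire compactly supported profile; it must instead be the explicit solution of the Dirichlet problem on a large ball $B_R$, whose positive part is supported in a core of radius $s_\de = O(\ep)$ but which carries a logarithmic tail $\hat q\,\ln(|x|/R)/\ln(s_\de/R)$ --- and it is precisely this tail that forces the shifted levels $\hat q_{\de,j}$ to be chosen through a coupled linear system encoding bump--bump and bump--boundary interactions, and that produces the interaction terms you must track in the energy expansion.
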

\begin{thm}\label{Th1.5}
Suppose that $\Om\subset\mathbb{R}^2$ is a smooth bounded domain.
Let $b\in C^1(\bar\Om),\,q\in C^2(\bar\Om)$, $\inf_\Om b>0$ and
$\inf_\Om q>0$. Then, for any given strictly local minimum points
$\hat z_1,\cdots,\hat z_n$ of $\frac{q^2}{b}$ on the boundary
$\partial\Om$, there exists $\ep_0>0$, such that for each
$\ep\in(0,\ep_0)$, \eqref{1.3} has a solution $u_\ep$, such that the
set $\Om_\ep=\{x:u_\ep-q\ln\frac{1}{\ep}>0\}$ has exactly $m$
components $\Om_{\ep,i},\,i=1,\cdots,m$ and as $\ep\rightarrow0$,
each $\Om_{\ep,i}$ shrinks to the point $\hat z_i$.
\end{thm}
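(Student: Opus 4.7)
The strategy is to parallel the variational reduction used to prove Theorem \ref{Th1.4}, but to enlarge the admissible set of concentration points so that it can accommodate centres migrating toward boundary minima of $\frac{q^2}{b}$. First, I would fix pairwise disjoint neighborhoods $D_1,\dots,D_n$ in $\bar\Om$ of the points $\hat z_1,\dots,\hat z_n$, chosen small enough that $\hat z_i$ is the unique strict minimizer of $\frac{q^2}{b}$ on $\partial\Om\cap\bar D_i$ and that no other local minima of $\frac{q^2}{b}$ lie in $\bar D_i$. The test configurations consist of tuples $(z_1,\dots,z_n)$ with $z_i\in D_i\cap\Om$, together with a separation constraint $\operatorname{dist}(z_i,\partial\Om)\geq |\ln\ep|^{-\alpha}$ for some positive $\alpha$ to be chosen. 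This separation is what allows the standard rescaled radial ``core'' profile to fit inside $\Om$ and to interact with the weighted Green's function of $-\text{div}(b^{-1}\nabla\cdot)$ as a quasi point vortex.

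Second, for each admissible configuration I would glue $n$ translated and suitably rescaled core profiles $U_{\ep,z_i}$, solve an auxiliary elliptic problem for a small correction $w_\ep(z_1,\dots,z_n)$ by the same penalization/reduction scheme as for Theorem \ref{Th1.4}, and thereby produce a true solution of \eqref{1.3} parameterized by $(z_1,\dots,z_n)$. The reduced energy $J_\ep(z_1,\dots,z_n)$ then admits an asymptotic expansion whose dominant non-constant part is, up to a positive multiplicative factor and a negligible remainder, $\sum_{i=1}^n\frac{q(z_i)^2}{b(z_i)}$ plus lower-order contributions coming from the Robin function of the weighted Laplacian and the mutual interaction terms between different $z_i$'s. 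Critical points of $J_\ep$ in the interior of the admissible set thus correspond exactly to the desired solutions of \eqref{1.3} with the sharply localized positivity sets.

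Third, I would minimize (respectively maximize) $J_\ep$ on the closure of the admissible set. By the strict minimum property of $\frac{q^2}{b}$ at $\hat z_i$ on $\partial\Om$, any minimizing sequence must have components converging to $\hat z_i$ as $\ep\to 0$. A boundary analysis then rules out two degeneracies: (a) escape of $z_i$ to $\partial D_i\cap\Om$, which is prevented by the strict minimum property outside any small neighborhood of $\hat z_i$; and (b) collapse of $z_i$ onto $\partial\Om$ at a rate faster than $|\ln\ep|^{-\alpha}$, which would make the Robin contribution diverge and raise the reduced energy above the minimum. This forces the extremizer into the interior of the admissible set, producing a critical configuration $(z_{1,\ep},\dots,z_{n,\ep})\to(\hat z_1,\dots,\hat z_n)$ and a solution $u_\ep$ whose positivity set splits into the $n$ required components $\Om_{\ep,i}\subset B(z_{i,\ep},C\ep)$.

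The principal obstacle will be item (b): producing a quantitative repulsion from $\partial\Om$ strong enough to keep $z_{i,\ep}$ a definite distance from the boundary while still allowing $z_{i,\ep}\to\hat z_i$. This requires sharp expansions of the Green's function and its regular part for the weighted operator $-\text{div}(b^{-1}\nabla\cdot)$ at points whose distance to $\partial\Om$ is of order $|\ln\ep|^{-\alpha}$, and a careful balance between the logarithmic growth of the Robin contribution and the gain obtained from $\frac{q^2}{b}(z_i)$ approaching the minimum value attained on $\partial\Om$. Once this balance is established, the existence of a minimizer, and the concentration statement, follow as in Theorem \ref{Th1.4}.
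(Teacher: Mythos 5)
Your overall strategy coincides with the paper's: the same finite-dimensional reduction as for Theorem \ref{Th1.4}, followed by minimization of the reduced energy $K(Z)$ over a constrained configuration set near $\partial\Om$, with the blow-up of the regular part $g(z,z)$ of the Green's function as $z\to\partial\Om$ (Proposition \ref{p5.2}) supplying the quantitative repulsion from the boundary that you single out as obstacle (b). That part is carried out in the paper exactly as you sketch, by comparing the minimizer with a test configuration placed at distance $|\ln\ep|^{-2}$ from $\partial\Om$ and deducing $g(z_{\ep,j},z_{\ep,j})\le C\ln|\ln\ep|$, hence $\operatorname{dist}(z_{\ep,j},\partial\Om)\ge|\ln\ep|^{-C}$.

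There is, however, a genuine gap in your step (a). Your admissible set carries only the lower bound $\operatorname{dist}(z_i,\partial\Om)\ge|\ln\ep|^{-\alpha}$, so it contains essentially all of $D_i\cap\Om$. But $\hat z_i$ is assumed to be a strict local minimum of $\frac{q^2}{b}$ only \emph{restricted to} $\partial\Om$; nothing in the hypotheses prevents $\frac{q^2}{b}$ from taking values strictly below $\frac{q^2(\hat z_i)}{b(\hat z_i)}$ at interior points of $D_i$ lying a fixed distance from $\partial\Om$. In that case the minimizer of $K$ over your set drifts to such an interior point (or escapes through $\partial D_i\cap\Om$), and neither the strict-minimum property on $\partial\Om$ nor the Robin term --- which \emph{increases} toward $\partial\Om$ and therefore exerts no pull toward the boundary --- rules this out; the claimed convergence $z_{i,\ep}\to\hat z_i$ is then lost. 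The paper closes this by imposing a two-sided constraint, $\operatorname{dist}(z_j,\partial\Om)\in\bigl(|\ln\ep|^{-\tau_1},|\ln\ep|^{-\tau_2}\bigr)$: the upper bound forces $z_j$ into a shrinking collar of $\partial\Om$, so that by continuity and boundary minimality $\frac{q^2(z_j)}{b(z_j)}\ge\frac{q^2(\hat z_j)}{b(\hat z_j)}-o(1)$ throughout the admissible set, and only then does the comparison $K(Z_\ep)\le K(\tilde Z_\ep)$ yield simultaneously $|z_{\ep,j}-\hat z_j|\to 0$ and the lower bound on the distance to $\partial\Om$, i.e.\ interiority of the minimizer. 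You need to add this upper bound to your admissible set and verify that the minimizer does not saturate it (which follows since $|z_{\ep,j}-\hat z_j|=O(\ln|\ln\ep|/|\ln\ep|)$ already forces $\operatorname{dist}(z_{\ep,j},\partial\Om)\ll|\ln\ep|^{-\tau_2}$ for $\tau_2<1$).
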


Not as in \cite{DV} where \eqref{1.3} is investigated directly,
 we prove Theorem \ref{Th1.4} and Theorem \ref{Th1.5}  by considering the following equivalent problem of \eqref{1.3}
 instead.
  Set $\de=\ep\left(\ln\frac{1}{\ep}\right)^{\frac{1-p}{2}}$,
$w=\frac{u}{|\ln\ep|}$, then \eqref{1.3} becomes
\begin{equation}\label{1.4}
\begin{cases}
-\de^2\text{div}(\frac{\nabla w}{b} )=b( w-q)_+^{p},& \text{in}\; \Om,\\
u=0, &\text{on}\;\partial \Om.
\end{cases}
\end{equation}

We will use a reduction argument to prove Theorem~\ref{Th1.4} and
Theorem~\ref{Th1.5}. To
 this end, we need to construct an approximate solution for
\eqref{1.4}.  For the problem studied in this paper, the
corresponding ``limit" problem in $\mathbb{R}^2$ has no bounded
nontrivial solution. So, we will follow the method in \cite{CPY,DY}
to construct an approximate solution. Since there are two parameters
$\de,~\ep$ and $b$ in problem \eqref{1.4}, which causes some
difficulty, we must take this influence into careful consideration
and give delicate estimates in order to perform the reduction
argument.

We will also apply the above idea and techniques to construct vortex
pairs to shallow water equations in section 5, which has never been
addressed before.

As a final remark,  our results seem connected with the work of Wei,
Ye and Zhou \cite{WYZ1,WYZ2,WYZ3} on the anisotropic Emden-Fowler
equation
\[
\begin{cases}
\text{div}(a(x)\nabla u)+\ep^2a(x)e^u=0,& \text{in}\; \Om,\\
u=0,&\text{on}\;\partial \Om.
\end{cases}
\]
They constructed (boundary)bubbling solutions showing a striking
difference with the isotropic case($a\equiv \text{constant}$).
Moreover, we point out that problem \eqref{1.4} can be considered as
a free boundary problem. Similar problems have been studied
extensively. The reader can refer to
\cite{CLW1,CLW2,CPY,DV,DY,FW,LP,SV} for more results on this kind of
problems.

This paper is organized as follows.  In Section~2, we construct the
approximate solution for \eqref{1.4}. We will carry out a reduction
argument in Section~3 and prove the main results  in Section~4. In
Section 5, we give some further results on vortex pairs for the
shallow water equations. Some basic estimates that used in sections
4 and 5 will be given in Section 6.

\section{Approximate Solutions}

 In the section, we will construct approximate solutions for
 \eqref{1.4}.

  Let $R>0$ be a large constant such that for any $x\in \Om$,
$\Om\subset\subset B_R(x)$. Consider the following Dirichlet
problem:

\begin{equation}\label{2.1}
\begin{cases}
-\delta^2\Delta w=( w-a)_+^{p},& \text{in}\; B_R(0),\\
w=0, &\text{on}\;\partial B_R(0),
\end{cases}
\end{equation}
where $a>0$ is a constant. Then, \eqref{2.1} has a unique solution
$W_{\de,a}$, which can be written as

\begin{equation}\label{2.2}
W_{\de,a}(x)=
\begin{cases}
a+\de^{\frac{2}{p-1}}s_\de^{-\frac{2}{p-1}}\phi\bigl(\frac{|x|}{s_\de}\bigr), &  |x|\le s_\de,\\
a\frac{\ln\frac {|x|} R}{\ln \frac {s_\de}R}, & s_\de\le |x|\le R,
\end{cases}
\end{equation}
where $\phi(x)=\phi(|x|)$ is the unique solution of
\begin{equation}\label{Q}
-\Delta \phi=\phi^{p},\quad\phi>0,~~\phi\in H_0^1\bigl(B_1(0)\bigr)
\end{equation}
and $s_\de\in (0,R)$ satisfies
\[
\de^{\frac{2}{p-1}}s_\de^{-\frac{2}{p-1}}\phi^\prime(1)=\frac{a}{\ln\frac{s_\de}{R}},
\]
which implies
\[
\frac{s_\de}{\de|\ln\de|^{\frac{p-1}{2}}}\rightarrow\left(\frac{|\phi^\prime(1)|}{a}\right)^{\frac{p-1}{2}}>0,\quad\text{as}~~\de\rightarrow0.
\]
Moreover, by Pohozaev identity, we can get that
\[
\int_{B_1(0)}\phi^{p+1}=\frac{\pi(p+1)
}{2}|\phi^\prime(1)|^2~~\text{and}~~\int_{B_1(0)}\phi^{p}=2\pi|\phi^\prime(1)|.
\]

For given $\hat b>0$ and $\hat q>0$, let  $V_{\de,\hat b,\hat q}(x)$
be the solution of the following Dirichlet problem
\begin{equation}\label{2.3}
\begin{cases}
-\delta^2\Delta v=\hat b^2( v-\hat q)_+^{p},& \text{in}\; B_R(0),\\
v=0, &\text{on}\;\partial B_R(0).
\end{cases}
\end{equation}

By scaling, from \eqref{2.1} and \eqref{2.2}, we obtain

\begin{equation}\label{2.4}
V_{\de,\hat b,\hat q}(x)=\hat b^{-\frac{2}{p-1}}W_{\de,\hat
b^{\frac{2}{p-1}}\hat q}(x)=
\begin{cases}
\hat q+\hat b^{-\frac{2}{p-1}}\left(\frac{\de}{s_\de}\right)^{\frac{2}{p-1}}\phi\bigl(\frac{|x|}{s_\de}\bigr), &  |x|\le s_\de,\\
\hat q\frac{\ln\frac {|x|} R}{\ln \frac {s_\de}R}, & s_\de\le |x|\le
R.
\end{cases}
\end{equation}

 For any $z \in \Om$,  define
$V_{\de,\hat b,\hat q,z}(x)=V_{\de,\hat b,\hat q}(x-z)$. Because
$V_{\de,\hat b,\hat q}$
 does not vanish on $\partial\Omega$, we need to  make a  projection. Let
$PV_{\de,\hat b,\hat q,z}$ be the solution of

\begin{equation}\label{2.50}
\begin{cases}
-\de^2 \Delta v=\hat b^2( V_{\de,\hat b,\hat q,z}-\hat q)_+^{p},& \text{in } \; \Om,\\
v=0, &\text{on}\; \partial \Om,
\end{cases}
\end{equation}
and $h(x,z)$ be the solution of
\[
\begin{cases}
- \Delta h=0,& \text{in } \; \Om,\\
h=\frac{1}{2\pi}\ln\frac{1}{|x-z|}, &\text{on}\; \partial \Om.
\end{cases}
\]
Then
\begin{equation}\label{2.5}
PV_{\de,\hat b,\hat q,z}(x)= V_{\de,\hat b,\hat q,z}(x)-\frac{\hat
q}{\ln \frac{R}{s_\de}} g(x,z),
\end{equation}
where $ g(x,z)=\ln R +2\pi h(x,z)$.

 We will
construct solutions for \eqref{1.4} of  the following form

\[
\sum_{j=1}^m PV_{\de,\hat b_{j},\hat q_{\de,j},z_{j}} +\omega_\de,
\]
where $z_j\in\Omega$ for $j=1,\cdots,m$, $\omega_\de$ is a
perturbation term. To obtain a good estimate for $\omega_\de$, we
need to choose $\hat q_{\de,j}$ properly.

Denote $Z=(z_1,\cdots,z_m)\in \mathbb{R}^{2m}$. In this paper, we
always assume that $z_j\in\Om$ satisfies

\begin{equation}\label{2.5}
\begin{split}
&|z_i-z_j|\ge
\varrho^{\bar L},\text{dist}(z_j,\partial\Om)\ge \varrho>0,\quad  \text{or}\\
&|z_j-\hat
z_j|<\eta,\,\,\text{dist}(z_j,\partial\Om)\geq\frac{1}{|\ln\ep|^\alpha},\quad
i, j=1,\cdots,m,\; i\ne j
\end{split}
\end{equation}
where $\varrho,\eta>0$ is a fixed small constant and $\bar
L,\alpha>0$ is a fixed large constant.

Let $\hat b_j=b(z_j)$ and $\hat q_{\de,j}(Z)$, $j=1,\cdots,m$ be the
solution of the following problem:
\begin{equation}\label{2.7}
\hat q_i=q(z_i)+\frac{\hat
q_i}{\ln\frac{R}{\ep}}g(z_i,z_i)-\sum_{j\neq i}\frac{\hat
q_j}{\ln\frac{R}{\ep}}\bar G(z_i,z_j),
\end{equation}
where $\bar G(x,z_j)=\ln\frac{R}{|x-z_j|}-g(x,z_j)$. It is not
difficult to see that since $\ln\frac{R}{\ep}\to\infty$ as $\ep\to0$
, \eqref{2.7} is a linear system with coefficient matrix, which is a
small perturbation of a positively definite diagonal matrix for
small $\ep$. Thus we can
 obtain the solution $(\hat q_{\de,1}(Z),\cdots,\hat
q_{\de,m}(Z))$ to \eqref{2.7}. Moreover, we have
\[
\hat q_{\de,i}(Z)=\frac{q(z_i)-\sum_{j\neq i}\frac{\hat
q_{\de,j}(Z)}{\ln\frac{R}{\ep}}\bar
G(z_i,z_j)}{1-\frac{g(z_i,z_i)}{\ln\frac{R}{\ep}}}.
\]

For simplicity, for given $Z=(z_1,\cdots,z_m)$, in this paper, we
will use $\hat q_{\de,i}$ instead of $\hat q_{\de,i}(Z)$.

Define
\begin{equation}\label{2.8}
V_{\de,Z,j}=PV_{_{\de,\hat b_{j},\hat
q_{\de,j},z_{j}}},\,\,V_{\de,Z}=\sum_{j=1}^m V_{\de,Z,j}.
\end{equation}

Let $s_{\de,i}$  be the solution of
$$
\de^{\frac{2}{p-1}} s^{-\frac{2}{p-1}}\phi^\prime(1)=\frac{\hat
b_i^{\frac{2}{p-1}}\hat q_{\de,i}}{\ln\frac{s}{R}},
$$
then we have
$$
\frac{1}{\ln\frac{R}{s_{\de,i}}}=\frac{1}{\ln\frac{R}{\ep}}+O\left(\frac{\ln|\ln\ep|}{|\ln\ep|^2}\right).
$$

 Thus,  we find that for $x\in
B_{L s_{\de,i}}(z_i)$, where $L>0$ is any fixed constant,

\[
\begin{split}
& V_{\de, Z,i}(x)-q(x)= V_{\de,\hat b_i,\hat q_{\de,i},z_{i}}(x)
-\frac {\hat q_{\de,i}}{\ln \frac{R}{s_{\de,i}}} g(x,z_i)-q(x)
\\
=&  V_{\de,\hat b_i,\hat q_{\de,i},z_{i}}(x)- q(z_i)-\frac {\hat
q_{\de,i}}{\ln \frac{R}{s_{ \de,i}}}
 g(z_i,z_i)+O(s_{\de,i})+O\left(\frac{s_{\de,i}|Dg(z_i,z_i)|}{\ln\frac{R}{s_{\de,i}}}\right)\\
 =& V_{\de,\hat b_i,\hat q_{\de,i},z_{i}}(x)- q(z_i)-\frac {\hat
q_{\de,i}}{\ln \frac{R}{\ep}}
 g(z_i,z_i)+O\left(\frac{\ln|\ln\ep|}{|\ln\ep|^2}\right)g(z_i,z_i)\\
\end{split}
\]
and for $j\ne i$ and  $x\in B_{Ls_{\de,i}}(z_i)$,

\[
\begin{split}
&  V_{\de, Z,j}(x)=V_{\de,\hat b_j,\hat
q_{\de,j},z_{j}}(x)-\frac{\hat q_{\de,j}}{\ln\frac R{s_{\de,j}} }
  g(x,z_j)=
 \frac{\hat q_{\de,j}}{\ln\frac R{s_{\de,j}}
}   \bar G(x,z_j)\\
=&
  \frac{\hat q_{\de,j}}{\ln\frac R{s_{\de,j}}} \bar  G(z_i,z_j)+\frac{\hat q_{\de,j}}{\ln\frac
R{s_{\de,j}}}\left(\bar G(x,z_j)-\bar G(z_i,z_j)\right)\\
=& \frac{\hat q_{\de,j}}{\ln\frac{R}{\ep}}\bar
G(z_i,z_j)+O\left(\frac{\ln|\ln\ep|}{|\ln\ep|^2}\right).
\end{split}
\]
So,  by using \eqref{2.7}, we obtain

\begin{equation}\label{2.9}
 V_{\de, Z}(x)-q(x)=
 V_{\de,\hat b_i,\hat q_{\de,i},z_{i}}(x)-\hat
 q_{\de,i}+O\left(\frac{\ln|\ln\ep|}{|\ln\ep|^2}g(z_i,z_i)\right),\,\,x\in B_{Ls_{\de,i}}(z_i).
\end{equation}

We end this section by giving the following formula which can be
obtained by direct computation and will be used in the next two
sections.

\begin{equation}\label{2.10}
\begin{array}{ll}
 \displaystyle\frac{\partial V_{\de,\hat b_i,\hat q_{\de,i},z_i}(x)}{\partial z_{i,h}}&\\
 =\left\{
 \begin{array}{ll}
\displaystyle\hat
b_i^{-\frac{2}{p-1}}\frac{1}{s_{\de,i}}\left(\frac{\de}{s_{\de,i}}\right)^{\frac{2}{p-1}}
\phi^\prime\bigl(\frac{|x-z_i|}{s_{\de,i}}\bigr)
\frac{z_{i,h}-x_h}{|x-z_i|}+ O\left(1\right),
 ~~x\in B_{s_{\de,i}}(z_i),\\
 \,\\
\displaystyle-\frac{\hat
q_{\de,i}}{\ln\frac{R}{s_{\de,i}}}\frac{z_{i,h}-x_h}{|x-z_i|^2}+O\left(1\right),
 \qquad\qquad\qquad\qquad\qquad\quad x\in \Omega\setminus B_{s_{\de,i}}(z_i).
\end{array}
\right.\\
\end{array}
\end{equation}

\section{The Reduction}
Let $V_{\de,Z}$ be given as in \eqref{2.8}, we are to find solutions
of the form $ V_{\de,Z}+\omega_{\de,\,Z}$, where $\omega_{\de,\,Z}$
is a small perturbation(obtained in Proposition \ref{p33}). We will
show that for any given $Z$, there exists $\omega_{\de,\,Z}$ such
that $w_{\de,\,Z}= V_{\de,Z}+\omega_{\de,\,Z}$ satisfies
\begin{equation}\label{qq}
\int_{\Omega}\left[\frac{\de^2}{b}\nabla w_{\de,\,Z}\nabla v-b(
w_{\de,\,Z}-q)_+^{p}v\right]=0,\,\, \text{for any}\, v\in
H^1_0(\Omega)\cap W^{2,\,p}(\Omega)\setminus H^*,
\end{equation}
where $H^*$ is a finite dimensional  subspace of  $H^1_0(\Omega)\cap
W^{2,\,p}(\Omega)$. In the next section, we will choose $Z$ properly
so that $V_{\de,Z}+\omega_{\de,\,Z}$ is a solution of \eqref{1.4}.

To show \eqref{qq}, we need to study  the kernel of
$\mathcal{L}w:=-\de^2\text{div}(\frac{\nabla w}{b} )-pb(
V_{\de,Z}-q)_+^{p-1}w$. To do this first we need to understand the
kernel of the linearized equation of
\begin{equation}\label{3.1}
-\Delta w= w_+^{p}, \quad \text{in}\; \mathbb{R}^2.
\end{equation}
Let
\[
w(x)=
\begin{cases}
\phi(|x|), &|x|\le 1,\\
\phi^\prime(1)\ln |x|,  & |x|>1,
\end{cases}
\]
where $\phi$ is the solution of \eqref{Q}, then $w\in
C^1(\mathbb{R}^2)$ is the unique solution of \eqref{3.1}. Since
$\phi^\prime(1)<0$ and $\ln |x|$ is harmonic for $|x|>1$. Moreover,
since $w_+$ is Lip-continuous,  by the Schauder estimate, $w\in
C^{2,\alpha}$ for any $\alpha\in (0,1)$.

The linearized equation of \eqref{3.1} at $w$ is  as follows
\begin{equation}\label{3.2}
-\Delta v-  pw_+^{p-1} v=0,\quad v\in L^\infty(\mathbb{R}^2).
\end{equation}
It is easy to see  that $\frac{\partial w}{\partial x_i}$, $i=1,2,$
is a solution of \eqref{3.2}. Moreover, from Dancer and Yan
\cite{DY}, we know that $w$ is also non-degenerate, in the sense
that the kernel of the operator $Lv:=-\Delta v- pw_+^{p-1} v,~~v\in
D^{1,2}(\mathbb{R}^2)$ is spanned by $\bigl\{\frac{\partial
w}{\partial x_1}, \frac{\partial w}{\partial x_2}\bigr\}$.

Let $V_{\de,Z,j}$ be the function defined in \eqref{2.8}.  Set

\[
F_{\de,Z}=\left\{u: u\in L^p(\Om),\; \int_\Om
 \frac{\partial
V_{\de,Z,j}}{\partial z_{j,h}}u =0, j=1,\cdots,m,\;h=1,2 \right\},
\]
and

\[
E_{\de,Z}=\left\{u:\;u\in W^{2,\,p}(\Om)\cap H_0^1(\Om), \int_\Om
 \Delta\left( \frac{\partial
V_{\de,Z,j}}{\partial z_{j,h}}\right)u =0, j=1,\cdots,m,\; h=1,2
\right\}.
\]

Define, for any $u\in L^p(\Om)$, $Q_\de u$ as follows:
\begin{equation}\label{Qdelta}
Q_\de u:= u-\sum_{j=1}^m \sum_{h=1}^2
c_{j,h}\left(-\de^2\Delta\Bigl(\frac{\partial V_{\de,Z,j}}{\partial
z_{j,h}}\Bigr)
 \right),
\end{equation}
where  the constants $c_{j,h}$($j=1,\cdots,m$, $h=1, 2$) are chosen
to satisfy

\begin{equation}\label{3.3}
\sum_{j=1}^m \sum_{h=1}^2 c_{j,h}\left(-\de^2\int_{\Om}\Delta\Bigl(
\frac{\partial V_{\de,Z,j}}{\partial z_{j, h}}\Bigr) \frac{\partial
V_{\de,Z,i}}{\partial z_{i,\bar h}}\right)=\int_{\Om} u
\frac{\partial V_{\de,Z,i}}{\partial z_{i,\bar h}}.
\end{equation}

Since  $\int_\Om
 \frac{\partial
V_{\de,Z,j}}{\partial z_{j,h}} Q_\de u=0$, the operator $Q_\de$ can
be regarded as a projection from $L^p(\Omega)$ to $F_{\de,Z}$. In
order to show the existence of $c_{j,h}$ satisfying \eqref{3.3}, we
just need the following estimate ( by \eqref{2.10}):

\begin{equation}\label{3.4}
\begin{split}
&-\de^2\int_{\Om}\Delta\Bigl( \frac{\partial V_{\de,Z,j}}{\partial
z_{j, h}}\Bigr)
\frac{\partial V_{\de,Z,i}}{\partial z_{i,\bar h}}\\
=& p\hat b_j^2\int_{\Om}\bigl(V_{\de,\hat b_j,\hat
q_{\de,j},z_j}-\hat q_{\de,j} \bigr)_+^{p-1}\left(\frac{\partial
V_{\de,\hat b_j,\hat q_{\de,j},z_{j}}}{\partial z_{j,
h}}-\frac{\partial \hat q_{\de,j}}{\partial z_{j,h}}\right)
\frac{\partial V_{\de,Z,i}}{\partial z_{i,\bar
h}}+O\left(\frac{\ep}{|\ln\ep|^{p+1}}\right)
\\
=& \delta_{ij h\bar h}\frac
{c^\prime}{|\ln\ep|^{p+1}}+O\left(\frac{\ep}{|\ln\ep|^{p}}\right),
\end{split}
\end{equation}
where $c^\prime>0$ is a constant, $\delta_{ijh \bar h}=1$,  if $i=j$
and $h=\bar h$;  otherwise, $\delta_{ijh \bar h}=0$.

Define
\[
L_\de u=-\de^2\text{div}\left(\frac{\nabla
u}{b}\right)-pb\left(V_{\de,Z}-q\right)_+^{p-1}u.
\]

For the operator $Q_\de L_\de$ we have the following lemma.

\begin{lem}\label{l31}
 There are constants $\rho_0>0$ and $\de_0>0$, such that for any
$\de\in (0,\de_0]$,
 $Z$ satisfying \eqref{2.5},  $ u\in E_{\de,Z}$ with
$Q_\de L_\de u =o(1)$ in  $L^p(\Omega\setminus \cup_{j=1}^m
 B_{L s_{\de,j}}
(z_j))$ for some $L>0$ large,  then

\[
\|Q_\de  L_\de u\|_{L^p(\Om)}  \ge
\frac{\rho_0\ep^{\frac{2}{p}}}{|\ln\ep|^{p-1}}
\|u\|_{L^\infty(\Om)}.
\]
\end{lem}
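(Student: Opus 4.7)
The plan is a contradiction argument combined with a blow-up/rescaling analysis near each concentration point $z_i$. Assume the conclusion fails: then there exist sequences $\de_n\to 0$, $Z_n=(z_1^n,\dots,z_m^n)$ satisfying \eqref{2.5}, and $u_n\in E_{\de_n,Z_n}$ with $\|u_n\|_{L^\infty(\Om)}=1$ such that $\|Q_{\de_n}L_{\de_n}u_n\|_{L^p(\Om)}=o\!\left(\ep_n^{2/p}/|\ln\ep_n|^{p-1}\right)$ and $Q_{\de_n}L_{\de_n}u_n=o(1)$ in $L^p(\Om\setminus\cup_j B_{Ls_{\de_n,j}}(z_j^n))$. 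Using \eqref{Qdelta} one writes
\[
L_{\de_n}u_n \;=\; f_n \;+\; \sum_{j,h} c^n_{j,h}\bigl(-\de_n^2\Delta\tfrac{\partial V_{\de_n,Z_n,j}}{\partial z_{j,h}}\bigr),
\]
where $f_n=Q_{\de_n}L_{\de_n}u_n$. The goal is to show $\|u_n\|_{L^\infty}\to 0$, contradicting the normalization.

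The first step is a local blow-up near each $z_i^n$. Set $\tilde u_n^i(y):=u_n(z_i^n+s_{\de_n,i}y)$ for $|y|\le L$. Using the explicit formula \eqref{2.4} together with $\hat b_i=b(z_i^n)$ and \eqref{2.9}, one checks that $s_{\de_n,i}^{-2}\de_n^2\cdot b(z_i^n+s_{\de_n,i}y)^{-1}\to 1$ and that the nonlinear coefficient $p b(V_{\de_n,Z_n}-q)_+^{p-1}$, rescaled by $s_{\de_n,i}^2/\de_n^2$, converges locally uniformly to $pw_+^{p-1}(y)$, where $w$ is the entire solution introduced after \eqref{3.1}. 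The contribution of the $c^n_{j,h}$-terms in the rescaled equation must be shown to be small; this is controlled by first multiplying the expansion of $L_{\de_n}u_n$ by $\partial V_{\de_n,Z_n,i}/\partial z_{i,\bar h}$, integrating, and inverting the almost-diagonal system \eqref{3.3} via the sharp estimate \eqref{3.4}, which yields $|c^n_{j,h}|=o(1/|\ln\ep_n|^{p-1})$ (the factor $\ep^{2/p}/|\ln\ep|^{p-1}$ in the statement is precisely dictated by this scaling, via Hölder against $\partial V/\partial z$). Since $\|\tilde u_n^i\|_{L^\infty}\le 1$, standard elliptic estimates give a $C^1_{\rm loc}$ limit $\tilde u_\infty^i$ solving $-\Delta\tilde u_\infty^i=pw_+^{p-1}\tilde u_\infty^i$ on $\mathbb{R}^2$. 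By the Dancer--Yan non-degeneracy recalled in the text, $\tilde u_\infty^i=\sum_h\beta_h\partial w/\partial y_h$. The orthogonality $\int_\Om\Delta(\partial V_{\de_n,Z_n,i}/\partial z_{i,h})u_n=0$ from $u_n\in E_{\de_n,Z_n}$ becomes, after rescaling and limit, $\int_{\mathbb R^2}(\partial w_+^p/\partial y_h)\tilde u_\infty^i=0$, which forces $\beta_1=\beta_2=0$. Hence $\tilde u_n^i\to 0$ in $C^0_{\rm loc}(\mathbb{R}^2)$, so $u_n\to 0$ uniformly on each $B_{Ls_{\de_n,i}}(z_i^n)$.

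The second step treats $\Om\setminus\cup_i B_{Ls_{\de_n,i}}(z_i^n)$. There $(V_{\de_n,Z_n}-q)_+=0$, so $L_{\de_n}u_n=-\de_n^2{\rm div}(\nabla u_n/b)$. The equation
\[
-\de_n^2{\rm div}\!\bigl(\tfrac{\nabla u_n}{b}\bigr)=f_n+\sum_{j,h}c^n_{j,h}\bigl(-\de_n^2\Delta\tfrac{\partial V_{\de_n,Z_n,j}}{\partial z_{j,h}}\bigr)
\]
holds, with $u_n=0$ on $\partial\Om$, $\|u_n\|_{L^\infty(\partial B_{Ls_{\de_n,i}}(z_i^n))}=o(1)$ by step one, the right-hand side $o(1)$ in $L^p$ on this set by assumption and by the estimates from step one on the $c^n_{j,h}$. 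Standard $W^{2,p}$-elliptic estimates for the divergence-form operator (uniform since $\inf_\Om b>0$ and $b\in C^1$) together with Sobolev embedding $W^{2,p}\hookrightarrow C^0$ in dimension two give $\|u_n\|_{L^\infty(\Om\setminus\cup_i B_{Ls_{\de_n,i}})}=o(1)$. Combining with step one, $\|u_n\|_{L^\infty(\Om)}=o(1)$, contradicting the normalization.

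The main obstacle is the coupled book-keeping of scales: extracting the precise weight $\ep^{2/p}/|\ln\ep|^{p-1}$ requires that the $c^n_{j,h}$ be controlled sharply enough that, when plugged back into the rescaled equation on $B_{Ls_{\de_n,i}}(z_i^n)$, the right-hand side is a genuine $o(1)$ perturbation rather than of the same order as the principal part. This forces a delicate use of \eqref{3.4} (for the linear inversion), of the local expansion \eqref{2.9} (for the shape of the potential), and of the orthogonality defining $E_{\de_n,Z_n}$ (to kill the kernel in the limit), which must all be carried out with the extra parameter $b$ treated as a $C^1$ perturbation of the constant $b(z_i^n)$ on each core.
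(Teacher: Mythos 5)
Your proposal follows essentially the same route as the paper's proof: a contradiction/normalization argument, control of the multipliers $c_{j,h}$ by testing against $\frac{\partial V_{\de,Z,i}}{\partial z_{i,\bar h}}$ and inverting the almost-diagonal system \eqref{3.3} via \eqref{3.4}, blow-up at scale $s_{\de,i}$ to the nondegenerate limit problem of Dancer--Yan, use of the orthogonality defining $E_{\de,Z}$ to kill the kernel, and an elliptic/maximum-principle argument on the outer region. One quantitative point to tighten: the intermediate bound you state, $|c^n_{j,h}|=o\bigl(1/|\ln\ep_n|^{p-1}\bigr)$, is not strong enough to make $\sum_{j,h} c^n_{j,h}\bigl(-\de_n^2\Delta\frac{\partial V_{\de_n,Z_n,j}}{\partial z_{j,h}}\bigr)$ negligible in $L^p$ against the threshold $\ep_n^{2/p}/|\ln\ep_n|^{p-1}$ (since $\|\de_n^2\Delta\frac{\partial V}{\partial z}\|_{L^p}\sim \ep_n^{2/p-1}/|\ln\ep_n|^{p}$, one actually needs $|c^n_{j,h}|=o(\ep_n|\ln\ep_n|)$), but the very computation you describe does yield $c^n_{j,h}=O(\ep_n\ln^2|\ln\ep_n|)$, which suffices.
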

\begin{proof}
 We will use $\|\cdot\|_p,
\|\cdot\|_\infty$ to denote $\|\cdot\|_{L^p(\Om)}$ and  $
\|\cdot\|_{L^\infty(\Om)}$ respectively. We argue by contradiction.
Suppose that there are $\de_n\to 0$, $Z_n=(z_{1,n},\cdots,z_{m,n})$
satisfying \eqref{2.5} and $u_n\in E_{\de_n,Z_n}$ with
$Q_{\de_n}L_{\de_n} u_n =o(1)$ in $L^p(\Omega\setminus \cup_{j=1}^m
 B_{L s_{n,j}}
(z_{j,n}))$, such that
\[
\|Q_{\de_n} L_{\de_n } u_n\|_{p} \le
\frac{1}{n}\frac{\ep_n^{\frac{2}{p}}}{|\ln\ep_n|^{p-1}},
\]
and $\|u_n\|_\infty =1$, where and in the sequel we set
$s_{n,j}=s_{\de_n,j}$ to simplify notation.

Firstly, we estimate $c_{j,h,n}$ corresponding to $u_n$ in
\eqref{Qdelta}. By definition $c_{j,h,n}$ satisfies:

\begin{equation}\label{3.5}
Q_{\de_n} L_{\de_n } u_n= L_{\de_n } u_n-\sum_{j=1}^m \sum_{h=1}^2
c_{j,h,n} \left(-\de_n^2\Delta\frac{\partial
V_{\de_n,Z_n,j}}{\partial z_{j,h}}\right).
\end{equation}

For each fixed $i$, multiplying \eqref{3.5} by $
 \frac{\partial
V_{\de_n,Z_n,i}}{\partial z_{i,\bar h}}$, noting that

\[
 \int_\Om\bigl( Q_{\de_n} L_{\de_n } u_n\bigr)
 \frac{\partial
V_{\de_n,Z_n,i}}{\partial z_{i,\bar h}}=0,
\]
we obtain

\[
\begin{split}
& \int_\Om
  u_n\, L_{\de_n} \left(\frac{\partial
V_{\de_n,Z_n,i}}{\partial z_{i,\bar h}} \right)= \int_\Om\bigl(
  L_{\de_n} u_n\bigr)\, \frac{\partial
V_{\de_n,Z_n,i}}{\partial z_{i,\bar h}} \\
&=\sum_{j=1}^m
 \sum_{ h=1}^2 c_{j,h,n} \int_\Om\left(-\de_n^2\Delta\frac{\partial V_{\de_n,Z_n,j}}{\partial
 z_{j,h}}\right)
\frac{\partial V_{\de_n,Z_n,i}}{\partial z_{i,\bar h}}.\\
\end{split}
\]
Using \eqref{2.9} and
 Lemma~\ref{l5.1}, we obtain

\[
\begin{split}
&\int_\Om
  u_n\, L_{\de_n} \left(\frac{\partial
V_{\de_n,Z_n,i}}{\partial z_{i,\bar h}}\right)\\
=&\int_\Omega u_n\left[-\de_n^2\text{div}\left(\frac{1}{\hat
b_i}\nabla\frac{\partial V_{\de_n,Z_n,i}}{\partial z_{i,\bar
h}}\right)
-p\hat b_i\left(V_{\de_n,Z_n}-q\right)_+^{p-1}\frac{\partial V_{\de_n,Z_n,i}}{\partial z_{i,\bar h}}\right]\\
&+\int_\Om u_n\left[\de_n^2\text{div}\left(\left(\frac{1}{\hat
b_i}-\frac{1}{b}\right)\nabla\frac{\partial
V_{\de_n,Z_n,i}}{\partial z_{i,\bar h}}\right)\right]\\
&+p\int_\Om u_n\left[(\hat
b_i-b)\left(V_{\de_n,Z_n}-q\right)_+^{p-1}\frac{\partial
V_{\de_n,Z_n,i}}{\partial z_{i,\bar h}}\right]\\
=&O\left(\frac{\ep_n\ln^2|\ln\ep_n|}{|\ln\ep_n|^{p+1}}\right)+O\left(\frac{\ep_n^2}{|\ln\ep_n|^{p-1}}\right)
+O\left(\frac{\ep_n^2}{|\ln\ep_n|^{p}}\right)\\
=&O\left(\frac{\ep_n\ln^2|\ln\ep_n|}{|\ln\ep_n|^{p+1}}\right).
\end{split}
\]

Using \eqref{3.4}, we find that
$$
c_{i,h,n}=O\left(\ep_n\ln^2|\ln\ep_n|\right).
$$
Therefore,
\[
\begin{split}
&\sum_{j=1}^m\sum_{h=1}^2c_{j,h,n}\left(-\de_n^2\Delta\frac{\partial
V_{\de_n,Z_n,j}}{\partial z_{j,h}}\right)\\
=&\sum_{j=1}^m\sum_{h=1}^2p\hat
b_{j,n}^2c_{j,h,n}\left(V_{\de_n,\hat b_{j,n},\hat
q_{j,n},z_{j,n}}-\hat q_{j,n}\right)_+^{p-1} \left(\frac{\partial
V_{\de_n,\hat b_{j,n},\hat q_{j,n},z_{j,n}}}{\partial
z_{j,h}}-\frac{\partial \hat q_{j,n}}{\partial z_{j,h}}\right)\\
&+\sum_{j=1}^m\sum_{h=1}^22\hat b_{j,n}c_{j,h,n}\frac{\partial \hat
b_{j,n}}{\partial z_{j,h}}\left(V_{\de_n,\hat b_{j,n},\hat
q_{j,n},z_{j,n}}-\hat q_{j,n}\right)_+^{p}\\
&=O\left(\sum_{j=1}^m\sum_{h=1}^2
\frac{\ep_n^{\frac{2}{p}-1}|c_{j,h,n}|}{|\ln\ep_n|^{p}}\right)\\
&=O\left(\frac{\ep_n^{\frac{2}{p}}\ln^2|\ln\ep_n|}{|\ln\ep_n|^{p}}\right)\quad
\text{in}~~L^p(\Om).
\end{split}
\]

Thus, we obtain

\[
L_{\de_n}u_n = Q_{\de_n}L_{\de_n}u_n
+O\left(\frac{\ep_n^{\frac{2}{p}}\ln^2|\ln\ep_n|}{|\ln\ep_n|^{p}}\right)
 =O\left(\frac 1n\frac{\ep_n^{\frac{2}{p}}}{|\ln\ep_n|^{p-1}}\right).
\]

For any fixed $i$, define

\[
\tilde u_{i,n} (y)= u_n(s_{n,i} y+z_{i,n}).
\]

Let
\[
\begin{array}{ll}
\tilde L_n u=-\text{div}\left(\frac{\nabla
u}{b(s_{n,i}y+z_{i,n})}\right)
-pb(s_{n,i}y+z_{i,n})\frac{s_{n,i}^2}{\de_n^2}
\left(V_{\de_n,Z_n}(s_{n,i}y+z_{i,n})\right.&\\
\qquad\qquad\,\left.-q(s_{n,i}y+z_{i,n})\right)_+^{p-1}u.
\end{array}
\]
Then

\[
s_{n,i}^{\frac{2}{p}}\times\frac{\de_n^2}{s_{n,i}^{2}}\|\tilde L_n
\tilde u_{i,n}\|_p= \| L_{\de_n} u_n\|_p.
\]

Noting that
$$ \left(\frac{\de_n}{s_{n,i}}\right)^2=O\left(\frac{1}{|\ln\ep_n|^{p-1}}\right),$$
we find that
$$
L_{\de_n}u_n=o\left(\frac{\ep_n^{\frac{2}{p}}}{|\ln\ep_n|^{p-1}}\right).
$$
 As a result,

\[
\tilde  L_{n } \tilde u_{i,n}
 =o(1),\quad \text{in}\; L^p(\Omega_n),
\]
where $\Omega_n=\bigl\{y: s_{n,i} y+z_{i,n}\in\Omega\bigr\}$.

Since $\|\tilde u_{i,n}\|_\infty=1$, by the regularity theory of
elliptic equations,  we may assume that

\[
\tilde u_{i,n}\to u_i,  \quad \text{in}\; C_{loc}^1(\mathbb{R}^2).
\]

It is easy to see that
\[
\begin{split}
&\frac{s_{n,i}^2}{\de_n^2}
\left(V_{\de_n,Z_n}(s_{n,i}y+z_{i,n})-q(s_{n,i}y+z_{i,n})\right)_+^{p-1}\\
&=\frac{s_{n,i}^2}{\de_n^2}\left(V_{\de_n,\hat b_{i,n},\hat
q_{i,n},z_{i,n}}-\hat q_{i,n}
+O\left(\frac{\ln^2|\ln\ep_n|}{|\ln\ep_n|^2}\right)\right)_+^{p-1}\\
&=\frac{1}{\hat
b^2_{i,n}}\left(\phi(y)+O\left(\frac{\ln^2|\ln\ep_n|}{|\ln\ep_n|^2}\right)\right)_+^{p-1}.
\end{split}
 \]
  Then, by Lemma~\ref{l5.1}, we find that
  $u_i$ satisfies

\[
-\Delta u_i-pw_+^{p-1} u_i= 0.
\]
Now from the Proposition 3.1 in \cite{DY}, we have

\begin{equation}\label{3.6}
u_i= c_1 \frac{\partial w}{\partial x_1}+ c_2 \frac{\partial
w}{\partial x_2}.
\end{equation}

Since

\[
\int_\Om \Delta \left(\frac{\partial V_{\de_n,Z_n,i}}{\partial z_{i,
h}}\right) u_n =0,
\]
we find that

\[
\int_{\mathbb{R}^2}\phi_+^{p-1} \frac{\partial \phi}{\partial z_h}
u_i =0,
\]
which, together with \eqref{3.6}, gives $u_i\equiv 0$. Thus,

\[
\tilde u_{i,n} \to 0,\quad \text{in}\; C^1(B_{L}(0)),
\]
for any $L>0$, which implies that $u_n=o(1)$ on $\partial
B_{Ls_{n.i}}(z_{i,n})$.

By assumption,

\[
Q_{\de_n} L_{\de_n} u_n = o(1),\quad\text{in}\; L^p(\Om\setminus
\cup_{i=1}^k B_{L s_{n,i}}(z_{i,n})).
\]

On the other hand, by Lemma~\ref{l5.1},  we have

\[
\left(V_{\de_n,Z_n} -q \right)_+ =0, \quad x\in \Om\setminus
\cup_{i=1}^k B_{L s_{n,i}}(z_{i,n}).
\]
Thus, we find that

\[
-\text{div} \left(\frac{\nabla u_n}{b}\right)  =o(1),\quad
\text{in}~\Om\setminus \cup_{i=1}^m B_{L s_{n,i}}(z_{i,n}).
\]
However, $u_n=0$ on $\partial\Om$ and $u_n=o(1)$ on $\partial
B_{Ls_{n,i}}(z_{i,n})$, $i=1,\cdots,m$. So we have

\[
u_n=o(1).
\]
 This is a contradiction.

\end{proof}

\begin{prop}\label{p32}
$Q_\de L_\de u$ is one to one and onto from $E_{\de,Z}$ to
$F_{\de,Z}$.
\end{prop}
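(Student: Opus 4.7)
The plan is to separate Proposition~\ref{p32} into injectivity and surjectivity, handling the former directly via Lemma~\ref{l31} and the latter via a Fredholm-index computation.

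For injectivity, suppose $u\in E_{\de,Z}$ satisfies $Q_\de L_\de u=0$. Then $Q_\de L_\de u=o(1)$ on $\Om\setminus\cup_j B_{Ls_{\de,j}}(z_j)$ trivially, so Lemma~\ref{l31} applies and yields
\[
\frac{\rho_0\ep^{2/p}}{|\ln\ep|^{p-1}}\|u\|_\infty\le\|Q_\de L_\de u\|_p=0,
\]
forcing $u\equiv 0$.

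For surjectivity, I would split $L_\de=A_\de+K_\de$, where $A_\de u:=-\de^2\text{div}(\nabla u/b)$ and $K_\de u:=-pb(V_{\de,Z}-q)_+^{p-1}u$. Because $b\in C^1(\bar\Om)$ with $\inf_\Om b>0$, Lax-Milgram plus elliptic $L^p$-regularity makes $A_\de:H_0^1(\Om)\cap W^{2,p}(\Om)\to L^p(\Om)$ an isomorphism. The multiplier of $K_\de$ is uniformly bounded and supported in $\cup_j\overline{B_{Ls_{\de,j}}(z_j)}$, and $W^{2,p}(\Om)\hookrightarrow C(\bar\Om)$ is compact in two dimensions, so $K_\de$ is a compact operator from $H_0^1\cap W^{2,p}$ to $L^p$. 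Hence $L_\de$ is Fredholm of index~$0$.

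Next, the projection $Q_\de:L^p(\Om)\to F_{\de,Z}$ is surjective with $2m$-dimensional kernel spanned by $\{-\de^2\Delta(\partial V_{\de,Z,j}/\partial z_{j,h})\}$, whose linear independence for small $\de$ follows from the almost-diagonal pairing matrix in \eqref{3.4}. Consequently $Q_\de$ has Fredholm index $2m$ as a map onto $F_{\de,Z}$, and by additivity of the index, $Q_\de L_\de:H_0^1\cap W^{2,p}\to F_{\de,Z}$ has index $2m$. Restricting the domain to the codimension-$2m$ subspace $E_{\de,Z}$ drops the index by~$2m$, so $Q_\de L_\de|_{E_{\de,Z}}:E_{\de,Z}\to F_{\de,Z}$ has index~$0$. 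Combined with injectivity, this forces surjectivity.

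The only genuinely analytic input is Lemma~\ref{l31}; once that a priori bound is in hand, Proposition~\ref{p32} reduces to standard functional-analytic facts plus the codimension bookkeeping guaranteed by \eqref{3.4}. The single point requiring minor care is verifying that $E_{\de,Z}$ and $F_{\de,Z}$ really have codimension exactly $2m$ in their ambient spaces, but this is immediate from the non-degeneracy of the $2m\times 2m$ matrix in \eqref{3.4} for small $\de$, so the proof is essentially routine after Lemma~\ref{l31}.
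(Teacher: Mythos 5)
Your argument is correct, and the injectivity half coincides with the paper's: both reduce it immediately to Lemma~\ref{l31}. For surjectivity you take a genuinely different, more abstract route. The paper first shows that $-\de^2\Delta$ (equivalently $Q_\de(-\de^2\Delta)$) is a bijection from $E_{\de,Z}$ onto $F_{\de,Z}$ via the Riesz representation theorem and $L^p$-regularity, then rewrites $Q_\de L_\de u=h$ as the fixed-point equation $u=\de^{-2}(-Q_\de\Delta)^{-1}[Tu+bh]$ with $T$ compact, and invokes the classical Fredholm alternative for compact perturbations of the identity; injectivity of $Q_\de L_\de$ then closes the loop. You instead split $L_\de=A_\de+K_\de$, observe $L_\de$ is Fredholm of index $0$, and track the index through the composition with $Q_\de$ and the restriction to $E_{\de,Z}$, using the nondegenerate pairing matrix in \eqref{3.4} to certify that both the kernel of $Q_\de$ and the codimension of $E_{\de,Z}$ equal $2m$ so the indices cancel. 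The two arguments rest on the same analytic inputs (injectivity from Lemma~\ref{l31}, compactness of the zeroth-order term, nondegeneracy of \eqref{3.4}), but yours trades the paper's explicit inversion of the Laplacian for index bookkeeping; the paper's version is more self-contained and constructive (it exhibits the inverse used later in the contraction argument of Proposition~\ref{p33}), while yours is shorter and makes transparent exactly where the dimension count $2m$ enters. Both are valid; just make sure, as you note, that the derivatives $\partial V_{\de,Z,j}/\partial z_{j,h}$ indeed lie in $H_0^1(\Om)\cap W^{2,p}(\Om)$ so that the pairing in \eqref{3.4} certifies linear independence of the defining functionals of $E_{\de,Z}$ on the correct space.
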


\begin{proof}

Suppose that $Q_\de L_\de u=0$. Then, by  Lemma~\ref{l31},
$u\equiv0$. Thus,  $Q_\de L_\de$ is one to one.

Next,  we prove that  $Q_\de L_\de $ is an onto map from $E_{\de,Z}$
to $F_{\de,Z}$.

Denote

\[
\tilde E= \Bigl\{ u:  u\in H_0^1(\Omega), \; \int_{\Omega}
\nabla\frac{\partial V_{\de,Z,j}}{\partial z_{j,h} } \nabla u=0,\;
j=1,\cdots,m, h=1,  2\Bigr\}.
\]
Note that $E_{\de,Z}=\tilde E\cap W^{2,p}(\Omega)$.

 For any
$\tilde h\in F_{\de,Z}$, by the Riesz representation theorem, there
is a unique $u\in H_0^1(\Om)$, such that

\begin{equation}\label{3.7}
\de^2\int_\Om \nabla u \nabla\varphi =\int_\Om \tilde h
\varphi,\quad \forall\; \varphi\in   H_0^1(\Om).
\end{equation}
On the other hand,  from $\tilde h\in F_{\de,Z}$, we find that $u\in
\tilde E$. Moreover, by the  $L^p$-estimate, we deduce that $u\in
W^{2,p}(\Omega)$. As a result, $u\in E_{\de,Z}$. Thus, we see that
$Q_\de (-\de^2\Delta )=-\de^2\Delta$ is an one to one and onto map
from $ E_{\de,Z}$ to $F_{\de,Z}$. On the other hand, $Q_\de L_\de
u=h$ is equivalent to

\begin{equation}\label{3.8}
\begin{split}
u=\de^{-2} (-Q_\de\Delta )^{-1}\left[Tu+bh\right],\quad u\in
E_{\de,Z}
\end{split}
\end{equation}
where \[ Tu=b\de^2\nabla\frac{1}{b}\nabla
u+pb^2\left(V_{\de,Z}-q\right)_+^{p-1}u+\sum_{j=1}^m\sum_{h=1}^2bc_{j,h}\left(-\de^2\Delta\frac{\partial
V_{\de,Z,j}}{\partial z_{j,h}}\right).
\]

 It is easy to check that $\de^{-2} (-Q_\de\Delta )^{-1}Tu$ is a compact operator in $E_{\de,Z}$. By the
Fredholm alternative, \eqref{3.8} is solvable if and only if

\[
u=\de^{-2} (-Q_\de\Delta )^{-1}Tu
\]
has only trivial solution, which is true since
 $Q_\de L_\de$ is a one to one map.
\end{proof}

Now  consider the equation

\begin{equation}\label{3.9}
Q_\de L_\de \omega=
 Q_\de l_\de + Q_\de R_\de(\omega),
\end{equation}
where

\begin{equation}\label{3.10}
l_\de =b\left(V_{\de,Z}-q\right)_+^p-\sum_{j=1}^m\frac{\hat
b_j^2}{b}\left(V_{\de,\hat b_j,\hat q_{\de,j},z_j}-\hat
q_{\de,j}\right)_+^p+\de^2\left(\nabla\frac{1}{b}\nabla
V_{\de,Z}\right)
\end{equation}
and

\begin{equation}\label{3.11}
\begin{split}
R_\de(\omega)=&
b\left(V_{\de,Z}+\omega-q\right)_+^p-b\left(V_{\de,Z}-q\right)_+^p-pb\left(V_{\de,Z}-q\right)_+^{p-1}\omega.
\end{split}
\end{equation}

Using Proposition~\ref{p32}, we can rewrite \eqref{3.9} as

\begin{equation}\label{3.12}
\omega =G_\de\omega =: (Q_\de L_\de)^{-1} Q_\de \bigl(
  l_\de +   R_\de(\omega)\bigr).
\end{equation}

The next proposition enables us to reduce the problem of finding a
solution for \eqref{1.4} to a finite dimensional one.

\begin{prop}\label{p33}
There is a $\de_0>0$, such that for any $\de\in (0,\de_0]$ and  $Z$
 satisfying  \eqref{2.5},  \eqref{3.9} has a unique solution $\omega_\de\in
 E_{\de,Z}$, with

\[
\|\omega_\de\|_\infty =O\Bigl(\frac{\ln|\ln\ep|}{|\ln\ep|^2}\Bigr).
\]
\end{prop}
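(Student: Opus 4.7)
The plan is to view \eqref{3.12} as a fixed-point problem for
\[
G_\de\omega=(Q_\de L_\de)^{-1}Q_\de\bigl(l_\de+R_\de(\omega)\bigr)
\]
and apply the contraction mapping principle on the closed ball
\[
S=\Bigl\{\omega\in E_{\de,Z}:\|\omega\|_\infty\le M\,\tfrac{\ln|\ln\ep|}{|\ln\ep|^2}\Bigr\}
\]
for a sufficiently large constant $M$. Proposition~\ref{p32} makes $G_\de$ well defined on $E_{\de,Z}$, and Lemma~\ref{l31} supplies the crucial a~priori bound
\[
\|(Q_\de L_\de)^{-1}v\|_\infty\le C\,\frac{|\ln\ep|^{p-1}}{\ep^{2/p}}\,\|v\|_p,
\]
so that the whole argument is reduced to $L^p$ estimates of $l_\de$ and $R_\de(\omega)$.

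The first main step is the error estimate $\|l_\de\|_p$. Writing $l_\de=\mathrm{I}+\mathrm{II}+\mathrm{III}$ for the three terms in \eqref{3.10}, Lemma~\ref{l5.1} gives $(V_{\de,Z}-q)_+\equiv 0$ on $\Om\setminus\cup_j B_{Ls_{\de,j}}(z_j)$ for $L$ large, so outside the vortex cores only $\mathrm{III}=\de^2\nabla(1/b)\nabla V_{\de,Z}$ survives, which is negligible thanks to the logarithmic decay of $V_{\de,Z}$ and the smallness $\de^2=O(\ep^2|\ln\ep|^{p-1})$. Inside each $B_{Ls_{\de,i}}(z_i)$ the key input is the pointwise expansion \eqref{2.9}: together with the Taylor expansion of $s\mapsto s_+^p$ around $V_{\de,\hat b_i,\hat q_{\de,i},z_i}-\hat q_{\de,i}$ and the Lipschitz continuity $b(x)=\hat b_i+O(|x-z_i|)$, the terms $\mathrm{I}$ and $\mathrm{II}$ essentially cancel, leaving the pointwise control
\[
|l_\de|\le C\,\tfrac{\ln|\ln\ep|}{|\ln\ep|^2}\,(V_{\de,\hat b_i,\hat q_{\de,i},z_i}-\hat q_{\de,i})_+^{p-1}+C\,s_{\de,i}\,(V_{\de,\hat b_i,\hat q_{\de,i},z_i}-\hat q_{\de,i})_+^{p}.
\]
Integrating over the disk of radius $O(s_{\de,i})\sim \ep|\ln\ep|^{(p-1)/2}$ and using $(V-\hat q)_+=O(1/|\ln\ep|)$ yields the target bound $\|l_\de\|_p\le C\ep^{2/p}\ln|\ln\ep|\,/|\ln\ep|^{p+1}$.

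The second step is to handle the nonlinear remainder. For $p>2$, a Taylor expansion gives
\[
|R_\de(\omega)|\le C\bigl((V_{\de,Z}-q)_+^{p-2}|\omega|^2+|\omega|^p\bigr),
\]
while for $1<p\le 2$ one uses $|R_\de(\omega)|\le C|\omega|^p$; analogous estimates control $R_\de(\omega_1)-R_\de(\omega_2)$ by the $L^\infty$-norm of $\omega_1-\omega_2$ times a small factor. For $\omega\in S$ both give $\|R_\de(\omega)\|_p=o(\|l_\de\|_p)$ and an induced Lipschitz constant on $S$ that is $o(1)$ as $\de\to 0$.

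Combining, for any $\omega\in S$ we get
\[
\|G_\de\omega\|_\infty\le C\,\frac{|\ln\ep|^{p-1}}{\ep^{2/p}}\bigl(\|l_\de\|_p+\|R_\de(\omega)\|_p\bigr)\le \tfrac12 M\,\frac{\ln|\ln\ep|}{|\ln\ep|^2},
\]
provided $M$ is large enough and $\de$ small, and similarly $G_\de$ is a strict contraction on $S$. The Banach fixed-point theorem then produces a unique $\omega_\de\in S$ satisfying \eqref{3.12} with the stated $L^\infty$ bound. The main obstacle is the $L^p$ estimate of $l_\de$: the inverse $(Q_\de L_\de)^{-1}$ amplifies by the large factor $|\ln\ep|^{p-1}/\ep^{2/p}$, so any naive bound is lost; it is precisely the defining system \eqref{2.7} for $\hat q_{\de,j}$ — encoding both the projection correction $g(x,z_j)$ and the pairwise Green's function interaction $\bar G(z_i,z_j)$ — that cancels the leading constant part of $V_{\de,Z}-q$ near each $z_i$ and leaves only the harmless $\ln|\ln\ep|/|\ln\ep|^2$ residual.
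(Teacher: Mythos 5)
Your proposal is correct and follows essentially the same route as the paper: a contraction argument built on the inversion bound of Lemma~\ref{l31}, the support property of Lemma~\ref{l5.1}, the expansion \eqref{2.9} (hence the choice \eqref{2.7} of $\hat q_{\de,j}$) to get $\|l_\de\|_p\le C\ep^{2/p}\ln|\ln\ep|/|\ln\ep|^{p+1}$, and the quadratic estimate on $R_\de(\omega)$. The only step you pass over silently is that $Q_\de$ is not an $L^p$-contraction, so bounding $\|Q_\de(l_\de+R_\de(\omega))\|_p$ requires estimating the coefficients $c_{j,h}$ from \eqref{3.3}--\eqref{3.4}, which the paper does explicitly; this is routine and does not affect the validity of your argument.
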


\begin{proof}

It follows from Lemma~\ref{l5.1} that if $L$ is large enough, $\de$
is small then

\[
 \left(V_{\de,Z}-q \right)_+=0, \quad x\in \Om\setminus
\cup_{j=1}^mB_{Ls_{\de,j}}(z_j).
\]

Let

\[
M=  E_{\de,Z}\cap\Bigl\{ \|\omega\|_\infty\le
\frac{1}{|\ln\ep|^{2-\theta}}\Big\},
\]
where $\theta>0\,\text{is a small constant}.$

 Then $M$ is complete
under $L^\infty$ norm  and $G_\de$ is a map from $ E_{\de,Z}$ to $
E_{\de,Z}$. We will show that $G_\de $ is a contraction map from $M$
to $M$ by two steps in the following.

Step~1.  $G_\de$ is a map from $M$ to $M$.

 For any $\omega\in M$, similar to Lemma~\ref{l5.1},
it is easy to  prove that for large $L>0$, $\de$ small

\begin{equation}\label{3.13}
 \left(V_{\de,Z}+\omega
-q\right)_+=0, \quad \text{in}\;\Om\setminus
\cup_{j=1}^mB_{Ls_{\de,j}}(z_j).
\end{equation}
Note also that for any $u\in L^\infty(\Om)$,

\[
Q_\de u= u\quad \text{in}\; \Om\setminus  \cup_{j=1}^m
B_{Ls_{\de,j}}(z_j).
\]
Direct computations yield that
 \[\left\|\de^2Q_\de\left(\nabla\frac{1}{b}\nabla
V_{\de,Z}\right)\right\|_p=\begin{cases}
\displaystyle\frac{\ep^2}{|\ln\ep|^p},\;\;1<p<2,\\
\displaystyle\frac{\ep^{\frac{2}{p}+1}}{|\ln\ep|^{p-1}},\;\;p\geq2.
\end{cases}
\]
Therefore, using Lemma~\ref{l5.1}, \eqref{3.10} and \eqref{3.11}, we
find that for any $\omega\in M$,

\[
 Q_\de l_\de + Q_\de R_\de(\omega)= o(1), \quad \text{in}\; L^p(\Om\setminus  \cup_{j=1}^m
B_{Ls_{\de,j}}(z_j)).
\]
So, we can apply Lemma~\ref{l31} to obtain

\[
\| (Q_\de L_\de)^{-1} \bigl(
 Q_\de l_\de + Q_\de R_\de(\omega)\bigr)\|_\infty
\le C\ep^{-\frac{2}{p}} |\ln\ep|^{p-1}\| Q_\de l_\de  +  Q_\de
R_\de(\omega)\|_p.
\]

Thus, for any
 $\omega\in M$, we have

\begin{equation}\label{3.14}
\begin{split}
\| G_\de(\omega)\|_\infty =& \| (Q_\de L_\de)^{-1} \bigl(
 Q_\de l_\de + Q_\de R_\de(\omega)\bigr)\|_\infty\\
\le & C\ep^{-\frac{2}{p}} |\ln\ep|^{p-1}\| Q_\de l_\de  +  Q_\de
R_\de(\omega)\|_p.
\end{split}
\end{equation}

It follows from \eqref{3.3}--\eqref{3.4} that the constant
$c_{j,h}$, corresponding to $u\in L^\infty(\Om)$,  satisfies

\[
|c_{j,h}| \le C |\ln\ep|^{p+1}\sum_{i,\,\bar h}
 \int_\Om \Bigl|\frac{\partial V_{\de,Z,i}}
{\partial z_{i,\bar h}}\Bigr| |u|.
\]

Hence, we find that the constant $c_{j,h}$, corresponding to $l_\de+
R_\de(\omega)$ satisfies

\[
\begin{split}
|c_{j,h}|\le & C|\ln\ep|^{p+1} \sum_{i,\,\bar h} \int_{\Om }
 \Bigl|\frac{\partial V_{\de,Z,i}}
{\partial z_{i,\bar h}}\Bigr||l_\de+ R_\de(\omega)
|\\
\leq&C|\ln\ep|^{p+1} \sum_{i,\,\bar h}
\sum_{j=1}^m\int_{B_{Ls_{\de,j}(z_j)}}
 \Bigl|\frac{\partial V_{\de,Z,i}}
{\partial z_{i,\bar h}}\Bigr||\tilde l_\de+ R_\de(\omega)
|+C\ep^2|\ln\ep|\\
 \le & C\ep^{1-\frac{2}{p}}|\ln\ep|^{p}\|\tilde l_\de+ R_\de(\omega)
\|_p+C\ep^2|\ln\ep|.
\end{split}
\]
where \[ \tilde
l_\de=b\left(V_{\de,Z}-q\right)_+^p-\sum_{j=1}^m\frac{\hat
b_j^2}{b}\left(V_{\de,\hat b_j,\hat q_{\de,j},z_j}-\hat
q_{\de,j}\right)_+^p.
\] As a result,

\[
\begin{split}
&\|Q_\de (l_\de+ R_\de(\omega))\|_p \\
\le & \| l_\de+ R_\de(\omega)\|_p+C \sum_{j,\,h} |c_{j,h}| \left\|
-\de^2\Delta\Bigl(\frac{\partial V_{\de,Z,j}}{\partial
z_{j,h}}\Bigr)
\right\|_p\\
= & C \| \tilde l_\de\|_p+ C\| R_\de(\omega)
\|_p+\left\|\de^2\nabla\frac{1}{b}\nabla
V_{\de,Z}\right\|_p+\frac{C\ep^{1+\frac{2}{p}}}{|\ln\ep|^{p-1}}\\
=& C \| \tilde l_\de\|_p+ C\| R_\de(\omega) \|_p+R(p,\ep)
\end{split}
\]
where
\[
R(p,\ep)= \begin{cases}\displaystyle \frac{C\ep^2}{|\ln\ep|^p},\,\,\,1<p<2,\\
\displaystyle
\frac{C\ep^{1+\frac{2}{p}}}{|\ln\ep|^{p-1}},\,\,\,p\geq2.
\end{cases}
\]

On the other hand, from Lemma~\ref{l5.1} and \eqref{2.9}, we can
deduce

\[
\begin{split}
\|\tilde l_\de\|_p \leq&\left\|b\left(
V_{\de,Z}-q\right)_+^{p}-\sum_{j=1}^m\frac{\hat
b_j^2}{b}\bigl(V_{\de,\hat b_j,\hat q_{\de,j},z_j}-\hat
q_{\de,j}\bigr)_+^{p}\right\|_p
\\
\le&\sum_{j=1}^m\frac{C\ln|\ln\ep|}{|\ln\ep|^2}\Big\|\bigl(V_{\de,\hat
b_j,\hat q_{\de,j},z_j}-\hat q_{\de,j}\bigr)_+^{p-1}\Big\|_p
\\
\le&C\frac{\ep^{\frac{2}{p}}\ln|\ln\ep|}{|\ln\ep|^{p+1}}.
\end{split}
\]

For the estimate of $\| R_\de(\omega) \|_p$, we have
\begin{equation}\label{3.15}
\begin{split}
\| R_\de(\omega)
\|_p=&\|b\left(V_{\de,Z}+\omega-q\right)_+^p-b\left(V_{\de,Z}-q\right)_+^p-pb\left(V_{\de,Z}-q\right)_+^{p-1}\omega\|_p\\
\le&C\|\omega\|_\infty^2\left\|\left(
V_{\de,Z}-q\right)_+^{p-2}\right\|_p\\
\leq&C\frac{\ep^{\frac{2}{p}}\|\omega\|_\infty^2}{|\ln\ep|^{p-2}}.
\end{split}
\end{equation}

Thus, we obtain

\begin{equation}\label{3.16}
\begin{split}
\| G_\de(\omega)\|_\infty \le &
C\ep^{-\frac{2}{p}}|\ln\ep|^{p-1}\Bigl(\| \tilde l_\de \|_p+\|
R_\de(\omega)\|_p+R(p,\ep)
\Bigr)\\
\le  &
C\ep^{-\frac{2}{p}}|\ln\ep|^{p-1}\left(\frac{\ep^{\frac{2}{p}}\ln|\ln\ep|}{|\ln\ep|^{p+1}}
+\frac{\ep^{\frac{2}{p}}\|\omega\|_\infty^2}{|\ln\ep|^{p-2}}\right)\\
\le&\frac{1}{|\ln\ep|^{2-\theta}}.
\end{split}
\end{equation}

Thus, $G_\de$ is a map from $M$ to $M$.

 Step~2.  $G_\de$ is a
contraction map.

In fact, for any $\omega_i\in M$, $i=1,2 $, we have

\[
G_\de \omega_1-G_\de \omega_2= (Q_\de L_\de)^{-1} Q_\de \bigl(
R_\de(\omega_1)-R_\de(\omega_2)\bigr).
\]
Noting that

\[
R_\de(\omega_1)=R_\de(\omega_2)=0,\quad\text{in}\; \Om\setminus
\cup_{j=1}^m B_{Ls_{\de,j}}(z_j),
\]
we can deduce as in Step~1 that
\[
\begin{split}
\|G_\de \omega_1-G_\de \omega_2\|_\infty\le& C\ep^{-\frac{2}{p}}|\ln\ep|^{p-1}\|R_\de(\omega_1)-R_\de(\omega_2)\|_p\\
\le&C|\ln\ep|^{p-1}\left(\frac{\|\omega_1\|_\infty}{|\ln\ep|^{p-2}}+\frac{\|\omega_2\|_\infty}{|\ln\ep|^{p-2}}\right)\|\omega_1-\omega_2\|_\infty\\
\le&\frac{C}{|\ln\ep|^{1-\theta}}\|\omega_1-\omega_2\|_\infty
\le\frac 12 \|\omega_1-\omega_2\|_\infty.
\end{split}
\]

Combining Step~1 and Step~2,  we have proved that $G_\de$ is a
contraction map from $M$ to $M$. As a consequence, there is a unique
$\omega_\de\in M$ such that $\omega_\de= G_\de\omega_\de$. Moreover,
it follows from \eqref{3.16} that

\[
\|\omega_\de\|_\infty\le C\frac{\ln|\ln\ep|}{|\ln\ep|^2}.
\]

\end{proof}

\section{Proof of Main Results}

In this section, we will choose $Z$ properly so that $
V_{\de,Z}+\omega_\de$, where $\omega_\de$ is the map obtained in
Proposition~\ref{p33}, is a solution of \eqref{1.4}.

Define
\[
I(u)=\frac{\de^2}{2}\int_\Omega \frac{|\nabla
u|^2}{b}-\frac{1}{p+1}\int_\Omega b\left(u-q(x)\right)_+^{p+1}
\]
and
\begin{equation}\label{K}
 K(Z)= I\left( V_{\de,Z} +\omega_\de\right).
\end{equation}
It is well known that  if $Z$ is a critical point of $K(Z)$, then
$V_{\de,Z} +\omega_\de$ is a solution of \eqref{1.4}.

In the following,  we will prove that $K(Z)$ has a critical point.
To do this let us first show that $I\left(V_{\de,Z} \right)$ is the
leading term in $K(Z)$.

 \begin{lem}\label{l41}
 We have
 \[
 K(Z)= I\left(V_{\de,Z}
\right)+O\left(
 \frac{\ep^2\ln|\ln\ep|}{|\ln\ep|^{p+2}}\right).
\]
 \end{lem}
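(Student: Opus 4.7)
The plan is to expand $K(Z) = I(V_{\de,Z} + \omega_\de)$ in Taylor series about $V_{\de,Z}$ and then estimate each resulting term using the information already collected in Proposition~\ref{p33}. Throughout set $V := V_{\de,Z}$ and $\omega := \omega_\de$. A direct calculation, using that $|\omega|$ is much smaller than $(V-q)_+ = O(|\ln\ep|^{-1})$ on the effective support $\cup_j B_{Ls_{\de,j}}(z_j)$ (cf.\ Lemma~\ref{l5.1} and \eqref{2.2}--\eqref{2.4}), gives the pointwise expansion
\begin{equation*}
K(Z) - I(V) = I'(V)[\omega] + \frac{\de^2}{2}\int_\Om \frac{|\nabla\omega|^2}{b} - \frac{p}{2}\int_\Om b(V-q)_+^{p-1}\omega^2 + \mathcal{R},
\end{equation*}
where $\mathcal{R}$ collects the cubic-and-higher remainder from Taylor-expanding $b(V+\omega-q)_+^{p+1}$, controlled by $|\mathcal{R}| \le C\|\omega\|_\infty^2\,\bigl\|(V-q)_+^{p-1}\bigr\|_1\,\|\omega\|_\infty + C\|\omega\|_\infty^{p+1}$.

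Integration by parts in $I'(V)[\omega]$, combined with the identity $-\de^2\Delta V_{\de,Z,j} = \hat b_j^2(V_{\de,\hat b_j,\hat q_{\de,j},z_j}-\hat q_{\de,j})_+^p$ and the definition \eqref{3.10} of $l_\de$, identifies $I'(V)[\omega] = -\int_\Om \omega\, l_\de$. The middle two terms combine as $\frac{1}{2}\int_\Om \omega L_\de\omega$. Now use the equation $Q_\de L_\de\omega = Q_\de(l_\de + R_\de(\omega))$ from Proposition~\ref{p33}: since both sides have the same projection in $F_{\de,Z}$,
\begin{equation*}
L_\de\omega = l_\de + R_\de(\omega) + \sum_{j,h} d_{j,h}\Bigl(-\de^2\Delta\frac{\partial V_{\de,Z,j}}{\partial z_{j,h}}\Bigr)
\end{equation*}
for some constants $d_{j,h}$. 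Testing with $\omega$ and using $\omega\in E_{\de,Z}$, which kills every $\int \omega\,\Delta(\partial V_{\de,Z,j}/\partial z_{j,h})$, one gets $\int \omega L_\de\omega = \int \omega l_\de + \int \omega R_\de(\omega)$. Substituting yields
\begin{equation*}
K(Z) - I(V_{\de,Z}) = -\tfrac{1}{2}\int_\Om \omega\, l_\de + \tfrac{1}{2}\int_\Om \omega\, R_\de(\omega) + \mathcal{R}.
\end{equation*}

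It remains to estimate the three terms. For the leading piece, use $|\int \omega l_\de| \le \|\omega\|_\infty\,\|l_\de\|_1$, split $l_\de$ into its concentrated part (supported on $\cup_j B_{Ls_{\de,j}}(z_j)$ of total area $O(s_{\de,j}^2) = O(\ep^2|\ln\ep|^{p-1})$) and the $\de^2\nabla(1/b)\cdot\nabla V_{\de,Z}$ piece, and bound each using the pointwise expansion \eqref{2.9} together with Lemma~\ref{l5.1}; combined with $\|\omega\|_\infty = O(\ln|\ln\ep|/|\ln\ep|^2)$ this produces the claimed error of order $\ep^2\ln|\ln\ep|/|\ln\ep|^{p+2}$. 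The remaining two terms $\int \omega R_\de(\omega)$ and $\mathcal{R}$ are controlled by $\|\omega\|_\infty^2\|(V-q)_+^{p-1}\|_1$ and $\|\omega\|_\infty^3\|(V-q)_+^{p-2}\|_1$ respectively (using $|R_\de(\omega)| \le Cb(V-q)_+^{p-1}\omega^2 + Cb|\omega|^{p+1}$), and both fall under the claimed bound. The main obstacle is the careful $l_\de$-bookkeeping: the leading, essentially constant-in-$x$ part of $l_\de$ on each ball comes from the $O(\ln|\ln\ep|/|\ln\ep|^2)$ discrepancy in \eqref{2.9} between $V_{\de,Z}-q$ and $V_{\de,\hat b_j,\hat q_{\de,j},z_j}-\hat q_{\de,j}$, and one must trace the exact power of $|\ln\ep|$ that comes out after multiplying by the $(V-\hat q)_+^{p-1}$ profile and integrating over the $s_{\de,j}^2$-sized core, exactly as in the reduction calculations of \cite{CPY,DY} but here with the additional weight $b$ and the extra parameter $\ep$ versus $\de$.
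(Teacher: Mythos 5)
Your proof is correct, and it relies on exactly the same ingredients as the paper's argument --- the equation \eqref{2.50} satisfied by each $V_{\de,Z,j}$, the reduced equation \eqref{3.9} together with the orthogonality $\omega_\de\in E_{\de,Z}$, the bound $\|\omega_\de\|_\infty=O(\ln|\ln\ep|/|\ln\ep|^2)$ from Proposition~\ref{p33}, and the support information of Lemma~\ref{l5.1} --- but it packages them differently. The paper keeps the exact splitting
\[
K(Z)-I(V_{\de,Z})=\de^2\int_\Om\frac{\nabla V_{\de,Z}\cdot\nabla\omega_\de}{b}+\frac{\de^2}{2}\int_\Om\frac{|\nabla\omega_\de|^2}{b}-\frac{1}{p+1}\int_\Om b\bigl[(V_{\de,Z}+\omega_\de-q)_+^{p+1}-(V_{\de,Z}-q)_+^{p+1}\bigr]
\]
and shows each of the three pieces is separately $O(\ep^2\ln|\ln\ep|/|\ln\ep|^{p+2})$, invoking the PDE for $\omega_\de$ only to convert $\de^2\int b^{-1}|\nabla\omega_\de|^2$ into integrals of $\omega_\de$ against the right-hand side. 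You instead run the second-order Taylor expansion and the identities $I'(V_{\de,Z})[\omega_\de]=-\int\omega_\de\, l_\de$ and $\int\omega_\de L_\de\omega_\de=\int\omega_\de(l_\de+R_\de(\omega_\de))$ (the latter valid because $\omega_\de\in E_{\de,Z}$ kills the multiplier terms), arriving at the closed formula $K(Z)-I(V_{\de,Z})=-\tfrac12\int\omega_\de l_\de+\tfrac12\int\omega_\de R_\de(\omega_\de)+\mathcal{R}$. This is the more standard reduction bookkeeping: it isolates the dominant correction as $-\tfrac12\int\omega_\de l_\de\le\tfrac12\|\omega_\de\|_\infty\|l_\de\|_{L^1}$, and indeed the $\de^2\nabla(1/b)\cdot\nabla V_{\de,Z}$ part of $l_\de$, of $L^1$-size $O(\ep^2/|\ln\ep|^{p})$ since $s_{\de,j}=O(\ep)$ and $\de^2=\ep^2|\ln\ep|^{1-p}$, is precisely what saturates the stated error; the concentrated part $\tilde l_\de$ and the quadratic terms contribute strictly less. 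One caveat, which you share with the paper's own \eqref{3.15}: for $1<p<2$ pointwise remainder bounds involving $(V_{\de,Z}-q)_+^{p-2}$ degenerate at the free boundary where $V_{\de,Z}-q$ vanishes; the safe form is $|R_\de(\omega)|\le C\bigl((V_{\de,Z}-q)_++|\omega|\bigr)^{p-1}|\omega|$, which still places $\int\omega_\de R_\de(\omega_\de)$ and $\mathcal{R}$ at order $\ep^2\ln^2|\ln\ep|/|\ln\ep|^{p+3}$, comfortably inside the claimed error.
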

\begin{proof}
Recall that
\[
V_{\de,Z}=\sum_{j=1}^mV_{\de,Z,j}.
\]
By the definition of $K(Z)$
\[
\begin{split}
K(Z)=& I\left( V_{\de,Z}\right)+\de^2\int_\Om\frac{1}{b}\nabla
V_{\de,Z}\nabla \omega_\de+\frac{\de^2}{2}\int_\Om\frac{|\nabla
\omega_\de|^2}{b}\\
&-\frac{1}{p+1}\int_\Om
b\left[(V_{\de,Z}+\om_\de-q)_+^{p+1}-(V_{\de,Z}-q)_+^{p+1}\right].
\end{split}
\]
Using Proposition \ref{p33} and \eqref{3.13}, we have
\[
\begin{split}
&\int_\Om
b\left[(V_{\de,Z}+\om_\de-q)_+^{p+1}-(V_{\de,Z}-q)_+^{p+1}\right]\\
=&\sum_{j=1}^m(p+1)\int_{B_{Ls_{\de,j}}(z_j)}b(V_{\de,Z}-q)_+^p\omega_\de+O\left(\frac{\ep^2\ln^2|\ln\ep|}{|\ln\ep|^{p+3}}\right)\\
=&O\left(\frac{s_{\de,j}^2\|\omega\|_\infty}{|\ln\ep|^p}\right)=O\left(\frac{\ep^2\ln|\ln\ep|}{|\ln\ep|^{p+2}}\right).
\end{split}
\]

On the other hand,
\[
\begin{split}
&\de^2\int_\Om\frac{1}{b}\nabla V_{\de,Z}\nabla\omega_\de
=\sum_{j=1}^m\int_\Om\frac{\hat b_i^2}{b}\left(V_{\de,\hat b_j,\hat
q_{\de,j},z_j}-\hat
q_{\de,j}\right)_+^p\omega_\de-\de^2\int_\Om \nabla\frac{1}{b}\nabla V_{\de,Z}\omega_\de\\
&\qquad\qquad\qquad\quad\,\,=O\left(\frac{\ep^2\ln|\ln\ep|}{|\ln\ep|^{p+2}}\right).
\end{split}
\]
Finally, we estimate
$\de^2\int_{\Om}\frac{1}{b}|\nabla\omega_\de|^2$.

 Note that
\[
\begin{split}
-\de^2\text{div}\left(\frac{1}{b}\nabla\omega_\de\right)=&b(V_{\de,Z}+\omega_\de-q)_+^p
-\sum_{j=1}^m\frac{\hat b_j^2}{b}\left(V_{\de,\hat b_j,\hat
q_{\de,j},z_j}-\hat q_{\de,j}\right)_+^p\\
&+\de^2\nabla\frac{1}{b}\nabla
V_{\de,Z}+\sum_{j=1}^m\sum_{h=1}^2c_{j,h}\left(-\de^2\Delta\frac{\partial
V_{\de,Z,j}}{\partial z_{j,h}}\right),
\end{split}
\]
hence
\[
\begin{split}
\de^2\int_\Om\frac{1}{b}|\nabla \om_\de|^2=&\int_\Om
b\left(V_{\de,Z}+\omega_\de-q\right)_+^p\om_\de-\sum_{j=1}^m\int_\Om\frac{\hat
b_j^2}{b}\left(V_{\de,\hat b_j,\hat q_{\de,j},z_j}-\hat
q_{\de,j}\right)_+^p\omega_\de\\
&+\de^2\int_\Om\omega_\de\nabla\frac{1}{b}\nabla
V_{\de,Z}+\sum_{j=1}^m\sum_{h=1}^2c_{j,h}\int_\Om\left(-\de^2\Delta\frac{\partial
V_{\de,Z,j}}{\partial z_{j,h}}\right)\om_\de\\
=&O\left(\frac{\ep^2\ln^3|\ln\ep|}{|\ln\ep|^{p+3}}\right)+O\left(\frac{\ep^2\ln|\ln\ep|}{|\ln\ep|^{p+2}}\right)
\\
 =&O\left(\frac{\ep^2\ln|\ln\ep|}{|\ln\ep|^{p+2}}\right).
\end{split}
\]

\end{proof}

\begin{proof}[Proof of Theorem~\ref{Th1.4}]
By  Proposition \ref{p5.2}, we have
\[
K(Z)=\sum_{j=1}^n\frac{\pi\de^2}{\ln\frac{R}{\ep}}\frac{q^2(z_j)}{b(z_j)}
+O\left(\frac{\de^2\ln|\ln\ep|}{|\ln\ep|^2}\right).
\]
Since $\bar z_1,\cdots,\bar z_m$ is strictly local minimum(maximum)
points  of $\frac{q^2}{b}$, for $\de>0$ small enough, there exists a
neighborhood $\mathcal {O}_{\ep,i}$ of $\bar z_i,i=1,\cdots,m$, such
that the reduced function $K(Z)$ admits at least one critical point
in $\mathcal {O}_{\ep,i}$. Hence, we get a solution $w_\de$ for
\eqref{1.4}. Let $u_\ep=|\ln\ep|w_\de$ and
$\de=\ep|\ln\ep|^{\frac{1-p}{2}}$, it is not difficult to check that
$u_\ep$ has all the properties listed in Theorem \ref{Th1.4} and
thus the proof of Theorem \ref{Th1.4} is complete.
\end{proof}

\begin{proof}[Proof of Theorem~\ref{Th1.5}]
Define
\[
\mathcal {M}=\left\{Z=(z_1,\cdots,z_n)\in\Om^n:|z_j-\hat
z_j|\leq\eta,\text{dist}{(z_j,\partial\Om)\in\left(\frac{1}{|\ln\ep|^{\tau_1}},\frac{1}{|\ln\ep|^{\tau_2}}\right)}\right\}
\]
where $\tau_1$ and $\tau_2$ will be determined later.

Consider the following minimizing problem
$$
\min\limits_{Z\in \bar{\mathcal {M}} }K(Z).
$$
There exists a minimizer $ Z_\ep$ for $K(Z)$ in $\bar {\mathcal
{M}}$. Now, as in Theorem \ref{1.4}, we just need to verify that
$Z_\ep$ is an interior point of $\bar{\mathcal {M}}$ and hence is a
critical point of $K(Z)$.

By  Proposition \ref{p5.2}, we have
\[
K(Z)=\sum_{j=1}^n\frac{\pi\de^2}{\ln\frac{R}{\ep}}\frac{q^2(z_j)}{b(z_j)}\left(1+\frac{g(z_j,z_j)}{\ln\frac{R}{\ep}}\right)
+O\left(\frac{\de^2\ln|\ln\ep|}{|\ln\ep|^2}\right).
\]

Let $\tilde Z_\ep=(\tilde z_{\ep,1},\cdots,\tilde z_{\ep,n})\in
\bar{\mathcal {M}}$ be such that
\[
|\tilde z_{\ep,j}-\hat z_j|=\text{dist}(\tilde
z_{\ep,j},\partial\Om)=\frac{1}{|\ln\ep|^2},
\]
then
\[
\frac{q^2(\tilde z_{\ep,j})}{b(\tilde z_{\ep,j})}=\frac{q^2(\hat
z_{j})}{b(\hat
z_{j})}+O\left(\frac{1}{|\ln\ep|^2}\right),\,\,\,g(\tilde
z_{\ep,j},\tilde z_{\ep,j})=O(\ln|\ln\ep|),\,\,j=1,\cdots,n.
\]
As a result,
\[
K(\tilde
Z_\ep)=\sum_{j=1}^n\frac{\pi\de^2}{\ln\frac{R}{\ep}}\frac{q^2(\hat
z_{j})}{b(\hat
z_{j})}++O\left(\frac{\de^2\ln|\ln\ep|}{|\ln\ep|^2}\right).
\]
Note that
\[
K( Z_\ep)\leq K(\tilde Z_\ep),
\]
we find
\[
\frac{q^2( z_{\ep,j})}{b( z_{\ep,j})}-\frac{q^2(\hat z_{j})}{b(\hat
z_{j})}\leq C\left(\frac{\ln|\ln\ep|}{|\ln\ep|}\right)
\]
and
\[
g(z_{\ep,j},z_{\ep,j})\leq C\ln|\ln\ep|,\,\,j=1,\cdots,n,
\]
where $C$ is independent of $\tau_1$ and $\tau_2$.

Hence, for $j=1,\cdots,n$, we have
\[
|z_{\ep,j}-\hat z_j|\leq
C\left(\frac{\ln|\ln\ep|}{|\ln\ep|}\right),\,\,\text{dist}(z_{\ep,j},
\partial\Om)\geq \frac{1}{|\ln\ep|^C}.
\]
Thus, $Z_\ep$ is an interior point of $\bar{\mathcal {M}}$ if we
choose $\tau_1$ to be sufficiently large and $\tau_2$ sufficiently
small in the definition of $\bar{\mathcal {M}}$.
\end{proof}

\begin{proof}[Proof of Theorem \ref{Th1.1} and \ref{Th1.2} ]

By Theorem \ref{Th1.4} and \ref{Th1.5}, we obtain that $u_\ep$ is a
solution of \eqref{1.3}.

Define for $x\in\Omega$,
\[
\left\{
\begin{array}{ll}
 \displaystyle\textbf{v}_\ep=\frac{\text{curl}
(u_\ep-q_\ep)}{b},&\\
\displaystyle\,h=-\frac{b(u_\ep-q_\ep)_+^{p+1}}{(p+1)\ep^2}-\frac{|\textbf{v}_\ep|^2}{2}.&
\end{array}
\right.
\]
Then, $\textbf{v}_\ep$ is a stationary solution of \eqref{1.1} with
$ \text{curl}\textbf{v}_\ep=\frac{b}{\ep^2}(u_\ep-q_\ep)_+^p.$

What remains to do is just to verify, as $\ep\rightarrow 0$, that
\[
\int_\Om\text{curl}\textbf{v}_\ep\rightarrow\sum_{j=1}^m\frac{2\pi
q(\bar z_i)}{b(\bar z_i)}.
\]

By direct calculations, we can obtain for $\ep$ small that
\[
\begin{split}
&\int_\Om\text{curl}\textbf{v}_\ep=\int_\Om\frac{b}{\ep^2}(u_\ep-q_\ep)_+^p\\
=&\frac{|\ln\ep|^p}{\ep^2}\int_{\cup_{i=1}^m
B_{Ls_{\de,i}}(z_i)}b\left(V_{\de,Z}+\omega_\de-q\right)_+^p\\
=&\sum_{i=1}^m\frac{|\ln\ep|^p}{\ep^2}\int_{B_{Ls_{\de,i}}(z_i)}\hat
b_i\left(V_{\de,Z}+\omega_\de-q\right)_+^p
+\sum_{j=1}^m\frac{|\ln\ep|^p}{\ep^2}\int_{B_{Ls_{\de,i}}(z_i)}(b-\hat
b_i)\left(V_{\de,Z}+\omega_\de-q\right)_+^p\\
=&\sum_{i=1}^m\frac{|\ln\ep|^p}{\ep^2}\int_{B_{Ls_{\de,i}}(z_i)}\hat
b_i\left(V_{\de,\hat b_i,\hat q_{\de,i},z_i}-\hat
q_{\de,i}+O\left(\frac{\ln^2|\ln\ep|}{|\ln\ep|^2}\right)\right)_+^p+O\left(\frac{\ep}{|\ln\ep|^{p-2}}\right)\\
=&\sum_{i=1}^m\frac{|\ln\ep|^p}{\ep^2}\int_{B_{s_{\de,i}}(z_i)}\hat
b_i\left(V_{\de,\hat b_i,\hat q_{\de,i},z_i}-\hat
q_{\de,i}\right)_+^p+O\left(\frac{\ln^2|\ln\ep|}{|\ln\ep|}\right)\\
=&\sum_{i=1}^m\frac{2\pi\hat q_{\de,i}}{\hat
b_i}\frac{|\ln\ep|}{\ln\frac{R}{s_{\de,i}}}+O\left(\frac{\ln^2|\ln\ep|}{|\ln\ep|}\right)\rightarrow\sum_{i=1}^m\frac{2\pi
q(\bar z_i)}{b(\bar z_i)}.
\end{split}
\]
Therefore, the result follows.
\end{proof}

\section{Further Results}

In this section, we will use the idea and techniques in the previous
sections to construct vortex pairs for the shallow water equations.
For this purpose, instead of \eqref{1.3}, similar to \cite{DV}, we
now consider the following boundary value problem:
\begin{equation}\label{f5.1}
\begin{cases}
-\ep^2\text{div}(\frac{\nabla u}{b} )=b( u-q_\ep)_+^{p}-b( -u-q_\ep)_+^{p},& \text{in}\; \Om,\\
u=0, &\text{on}\;\partial \Om,
\end{cases}
\end{equation}
where $p>1$, $q=-\psi_0$, $q_\ep=q\ln\frac{1}{\ep}$,
$\Om\subset\mathbb{R}^2$ is smooth bounded domain.

Set $\de=\ep\left(\ln\frac{1}{\ep}\right)^{\frac{1-p}{2}}$,
$w=\frac{u}{|\ln\ep|}$, then \eqref{f5.1} becomes
\begin{equation}\label{f5.2}
\begin{cases}
-\de^2\text{div}(\frac{\nabla w}{b} )=b( w-q)_+^{p}-b(-w-q)_+^p,& \text{in}\; \Om,\\
w=0, &\text{on}\;\partial \Om.
\end{cases}
\end{equation}

Let $\hat b_i^\pm=b(z_i^\pm)$ and $\hat q_{\de,i}^\pm$ be the
solutions of the following system:
\begin{equation}\label{f5.3}
\begin{cases}
\displaystyle\hat q_{i}^+=q(z_i^+)+\frac{\hat
q_{i}^+}{\ln\frac{R}{\ep}}g(z_i^+,z_i^+)
-\sum_{k\neq i}\frac{\hat q_k^+}{\ln\frac{R}{\ep}}\bar G(z_i^+,z_k^+)+\sum_{l=1}^n\frac{\hat q_l^-}{\ln\frac{R}{\ep}}\bar G(z_l^-,z_i^+),\\
\displaystyle\hat q_{j}^-=q(z_j^-)+\frac{\hat
q_{j}^-}{\ln\frac{R}{\ep}}g(z_j^-,z_j^-) -\sum_{k\neq j}\frac{\hat
q_k^-}{\ln\frac{R}{\ep}}\bar G(z_k^-,z_j^-)+\sum_{l=1}^m\frac{\hat
q_l^+}{\ln\frac{R}{\ep}}\bar G(z_l^+,z_j^-),
\end{cases}
\end{equation}
where $i=1,\cdots,m,\,j=1,\cdots,n$.

 Set
\[
V_{\de,Z,j}^\pm=PV_{_{\de,\hat b_{j},\hat
q^\pm_{\de,j},z^\pm_{j}}},\,\,V_{\de,Z}^+=\sum_{i=1}^m
V_{\de,Z,i}^+,\,\,V_{\de,Z}^-=\sum_{j=1}^n V_{\de,Z,j}^-,
\]
and
\[
J(u)=\frac{\de^2}{2}\int_\Om\frac{|\nabla
u|^2}{b}-\frac{1}{p+1}\int_\Om b[(u-q)_+^{p+1}+(-u-q)_+^{p+1}].
\]

Let $s^\pm_{\de,i}$  be the solution of
$$
\de^{\frac{2}{p-1}} s^{-\frac{2}{p-1}}\phi^\prime(1)=\frac{(\hat
b^\pm_i)^{\frac{2}{p-1}}\hat q^\pm_{\de,i}}{\ln\frac{s}{R}}.
$$
Then, we have
$$
\frac{1}{\ln\frac{R}{s^\pm_{\de,i}}}=\frac{1}{\ln\frac{R}{\ep}}+O\left(\frac{\ln|\ln\ep|}{|\ln\ep|^2}\right).
$$
Thus, as in \eqref{2.9}, we find, for $i=1,\cdots,m$ and
$j=1,\cdots,n$ that
\[
V_{\de,Z}^+-V_{\de,Z}^--q(x)=V_{\de,\hat b_i^+,\hat
q_{\de,i}^+,z_i^+}-\hat
q_{\de,i}^++O\left(\frac{\ln|\ln\ep|}{|\ln\ep|^2}g(z_i^+,z_i^+)\right)
,\,\,x\in B_{Ls_{\de,i}^+}(z_i^+),
\]
and
\[
V_{\de,Z}^--V_{\de,Z}^+-q(x)=V_{\de,\hat b_j^-,\hat
q_{\de,j}^-,z_j^-}-\hat
q_{\de,j}^-+O\left(\frac{\ln|\ln\ep|}{|\ln\ep|^2}g(z_j^-,z_j^-)\right)
,\,\,x\in B_{Ls_{\de,j}^-}(z_j^-).
\]

Similar to Proposition \ref{p5.2}, we have the following energy
expansion:
\begin{equation}\label{f5.4}
\begin{split}
J(V_{\de,Z}^+-V_{\de,Z}^-)
=&\sum_{i=1}^m\frac{\pi\de^2}{\ln\frac{R}{\ep}}\frac{q^2(z^+_i)}{b(z_i^+)}\left(1+\frac{g(z_i^+,z_i^+)}{\ln\frac{R}{\ep}}\right)\\
&+\sum_{j=1}^n\frac{\pi\de^2}{\ln\frac{R}{\ep}}\frac{q^2(z^-_j)}{b(z_j^-)}\left(1+\frac{g(z_j^-,z_j^-)}{\ln\frac{R}{\ep}}\right)
+O\left(\frac{\de^2\ln|\ln\ep|}{|\ln\ep|^2}\right).
\end{split}
\end{equation}

From \eqref{f5.4}, we can deduce the following result:

\begin{thm}\label{fTh5.1}
Suppose that $\Om\subset\mathbb{R}^2$ is a smooth bounded domain.
Let $b\in C^1(\bar\Om)$,\,$\psi_0\in C^2(\bar{\Omega})$ be such that
$\rm{div}(\frac{\nabla \psi_0}{b})=0$ and let
$\textbf{v}_0=\rm{curl}\psi_0$. If $\inf_\Om b>0$ and
$\sup_\Om\psi_0<0$, then for any given strictly local
minimum(maximum) points $\bar z^+_1,\cdots,\bar z^+_m,$ $\bar
z_1^-,\cdots,\bar z_n^-$ of $\frac{\psi_0^2}{b}$, there exists
$\ep_0>0$, such that for each $\ep\in(0,\ep_0)$, we can find a
family solutions $\textbf{v}_\ep\in C^1(\Om,\mathbb{R}^2)$ and
$h_\ep\in C^1(\Om)$ of
\[
\begin{cases}
\rm{div}(b\textbf{v}_\ep)=0,& \text{in}\; \Om,\\
(\textbf{v}_\ep \cdot \nabla)\textbf{v}_\ep=-\nabla h_\ep, &
\text{in}\; \Om,\\
\textbf{v}_\ep\cdot\textbf{n}=\textbf{v}_0\cdot\textbf{n}\ln\frac{1}{\ep},
& \text{on}\; \partial\Om,
\end{cases}
\]
where $\textbf{n}$ is the outward normal direction. Furthermore,as
$\ep\rightarrow0$, the corresponding vorticity
$\om_\ep:=\rm{curl}\,\textbf{v}_\ep$ satisfying
\[
\text{supp}\,(\om_\ep^+)\subset \cup_{i=1}^m B(z^+_{i,\ep},C\ep)
\,\,\text{for}\,z^+_{i,\ep}\in\Om,\,\,i=1,\cdots,m,
\]
\[
\text{supp}\,(\om_\ep^-)\subset \cup_{j=1}^n B(z^-_{j,\ep},C\ep)
\,\,\text{for}\,z^-_{j,\ep}\in\Om,\,\,j=1,\cdots,n,
\]

\[
\int_\Om\om_\ep\rightarrow-\sum_{i=1}^m\frac{2\pi\psi_0(\bar
z^+_i)}{b(\bar z^+_i)}+\sum_{j=1}^n\frac{2\pi\psi_0(\bar
z^-_j)}{b(\bar z^-_j)},
\]
\[
(z^+_{1,\ep},\cdots,z^+_{m,\ep},z^-_{1,\ep},\cdots,z^-_{n,\ep})\rightarrow(\bar
z^+_1,\cdots,\bar z^+_m,\bar z_1^-,\cdots,\bar z_n^-).
\]

\end{thm}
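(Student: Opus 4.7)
The plan is to adapt the Lyapunov--Schmidt reduction of Sections~2--4 to the sign-changing nonlinearity in \eqref{f5.2}. The ansatz is $V_{\de,Z}^+-V_{\de,Z}^-$ with the parameters $\hat q^\pm_{\de,i}$ determined by the coupled linear system \eqref{f5.3}, which is uniquely solvable for small $\ep$ because its off-diagonal entries are $O(1/\ln(R/\ep))$, making the coefficient matrix a small perturbation of the identity. The crucial geometric observation is that near any positive center $z_i^+$, $V_{\de,Z}^-$ is bounded by $O(\ln|\ln\ep|/|\ln\ep|)$ while $V_{\de,Z}^+-q$ exceeds a positive $O(1)$ quantity, so $(-w-q)_+$ vanishes on a fixed neighborhood of $z_i^+$; symmetrically $(w-q)_+=0$ near each $z_j^-$. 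Consequently the equation \eqref{f5.2} decouples on each concentration region into a one-signed equation of the type treated in Sections~2--3.

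Next, I would perform the reduction in the subspace
\[
E_{\de,Z}=\Bigl\{u\in W^{2,p}(\Om)\cap H_0^1(\Om):\int_\Om\Delta\!\Bigl(\frac{\partial V^\pm_{\de,Z,j}}{\partial z^\pm_{j,h}}\Bigr)u=0,\;\forall j,h,\pm\Bigr\}.
\]
The analogue of the linearized operator $L_\de$ of Section~3 now contains both $pb(V^+_{\de,Z}-V^-_{\de,Z}-q)_+^{p-1}$ and $pb(-V^+_{\de,Z}+V^-_{\de,Z}-q)_+^{p-1}$, but by the decoupling above only the first term is active near each $z_i^+$ and only the second near each $z_j^-$. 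Rescaling at scale $s_{\de,i}^\pm$ around each center therefore recovers the same limiting equation $-\Delta U=pw_+^{p-1}U$ on $\mathbb{R}^2$ studied in Lemma~\ref{l31}, whose kernel is spanned by $\partial w/\partial x_h$. The invertibility estimate of Lemma~\ref{l31} and the contraction mapping argument of Proposition~\ref{p33} then go through unchanged, producing a unique $\omega_\de\in E_{\de,Z}$ with $\|\omega_\de\|_\infty=O(\ln|\ln\ep|/|\ln\ep|^2)$.

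The finite-dimensional reduction is then performed as in Section~4 on $\tilde{K}(Z^+,Z^-)=J(V^+_{\de,Z}-V^-_{\de,Z}+\omega_\de)$. An energy expansion analogous to Lemma~\ref{l41}, together with \eqref{f5.4}, shows that $\tilde{K}$ equals the sum of uncoupled contributions $\frac{\pi\de^2}{\ln(R/\ep)}\frac{q^2(z_i^\pm)}{b(z_i^\pm)}(1+g(z_i^\pm,z_i^\pm)/\ln(R/\ep))$ plus errors $O(\de^2\ln|\ln\ep|/|\ln\ep|^2)$; the interaction between the $+$ and $-$ clouds enters only at this lower order because it is carried by the regular Green-like term $\bar G$ evaluated at bounded distances. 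Around each strictly local extremum $\bar z_i^\pm$ of $q^2/b$, the same min/max argument used for Theorem~\ref{Th1.4} yields a critical point of $\tilde{K}$ with $z^\pm_{i,\ep}\to\bar z_i^\pm$.

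Finally, pulling back to shallow-water variables via $u_\ep=|\ln\ep|w_{\de,Z_\ep}$ with $\de=\ep|\ln\ep|^{(1-p)/2}$, $\textbf{v}_\ep=\text{curl}(u_\ep-q_\ep)/b$, and
\[
h_\ep=-\frac{b[(u_\ep-q_\ep)_+^{p+1}+(-u_\ep-q_\ep)_+^{p+1}]}{(p+1)\ep^2}-\frac{|\textbf{v}_\ep|^2}{2},
\]
one obtains a stationary solution of \eqref{1.1}. The computation at the end of Section~4, applied separately on each $+$ core (where $\text{curl}\,\textbf{v}_\ep=b(u_\ep-q_\ep)_+^p/\ep^2>0$) and each $-$ core (where it equals $-b(-u_\ep-q_\ep)_+^p/\ep^2<0$), yields the claimed limit of the total vorticity. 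The main obstacle is the clean separation of positive and negative vortex supports throughout the reduction: this is where one must use the a priori constraint \eqref{2.5} on $Z^\pm$ to guarantee that $|z_i^+-z_j^-|$ stays bounded below, so that $V_{\de,Z}^-$ is genuinely lower order near each $+$ center and vice versa.
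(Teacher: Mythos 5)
Your proposal follows essentially the same Lyapunov--Schmidt route as the paper's (sketched) proof: the same ansatz $V^+_{\de,Z}-V^-_{\de,Z}$ with $\hat q^\pm_{\de,i}$ from the coupled system \eqref{f5.3}, the same decoupling of the positive and negative cores (so that $(-w-q)_+$ vanishes near each $z_i^+$ and $(w-q)_+$ near each $z_j^-$), the same reduction and energy expansion \eqref{f5.4}, and the same final computation of $\int_\Om\om_\ep$. The only discrepancy is the sign of the $(-u_\ep-q_\ep)_+^{p+1}$ contribution to $h_\ep$, where your version (both terms with a minus sign, consistent with $h=-F(\psi)-|\textbf{v}|^2/2$ and $F(s)=\int_0^s f$) appears to be the correct one.
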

\begin{proof}
Since the arguments are similar to those used in section 3 and
section 4 we will not give detail here. Let $\om_\de$ be the map
obtained in the reduction procedure. Define
\[
\tilde K(Z)=J(V^+_{\de,Z}-V_{\de,Z}^-+\om_\de).
\]
Then, as in Lemma \ref{l41}, we can prove
\[
\tilde K(Z)=J(V^+_{\de,Z}-V_{\de,Z}^-)+O\left(
 \frac{\ep^2\ln|\ln\ep|}{|\ln\ep|^{p+2}}\right).
\]
Similar to Theorem \ref{Th1.4}, we can obtain   a solution $u_\ep$
for \eqref{f5.1}.

Define for $x\in\Omega$,

\[
\left\{
\begin{array}{ll}
 \displaystyle\,\textbf{v}_\ep=\frac{\text{curl}
(u_\ep-q_\ep)}{b},&\\
 \displaystyle\,h=-\frac{b(u_\ep-q_\ep)_+^{p+1}}{(p+1)\ep^2}+\frac{b(-u_\ep-q_\ep)_+^{p+1}}{(p+1)\ep^2}-\frac{|\textbf{v}_\ep|^2}{2}.&
\end{array}
\right.
\]
Thus, $\textbf{v}_\ep$ is a stationary solution of \eqref{1.1} with
$$
\text{curl}\,\textbf{v}_\ep=\frac{b}{\ep^2}(u_\ep-q_\ep)_+^p-\frac{b}{\ep^2}(-u_\ep-q_\ep)_+^p.
$$

Now, as in Theorem \ref{Th1.1}, we have
\[
\begin{array}{ll}
\displaystyle\int_\Om\text{curl}\,\textbf{v}_\ep=\int_\Om\frac{b}{\ep^2}(u_\ep-q_\ep)_+^p-\int_\Om\frac{b}{\ep^2}(-u_\ep-q_\ep)_+^p&\\
\qquad\qquad\displaystyle\,=\frac{|\ln\ep|^p}{\ep^2}\left(\int_{\cup_{i=1}^m
B_{Ls^+_{\de,i}}(z^+_i)}b\left(V^+_{\de,Z}-V_{\de,Z}^-+\omega_\de-q\right)_+^p\right.&\\
\qquad\qquad\qquad\qquad\displaystyle\,\,\,\left.-\int_{\cup_{j=1}^n
B_{Ls^-_{\de,j}}(z^-_j)}b\left(V^-_{\de,Z}-V_{\de,Z}^+-\omega_\de-q\right)_+^p\right)&\\
\qquad\qquad\displaystyle\,=\sum_{i=1}^m\frac{|\ln\ep|^p}{\ep^2}\int_{B_{Ls^+_{\de,i}}(z^+_i)}\hat
b^+_i\left(V_{\de,\hat b^+_i,\hat q^+_{\de,i},z^+_i}-\hat
q^+_{\de,i}+O\left(\frac{\ln^2|\ln\ep|}{|\ln\ep|^2}\right)\right)_+^p&\\
\qquad\qquad\displaystyle\,\,\,-\sum_{j=1}^n\frac{|\ln\ep|^p}{\ep^2}\int_{B_{Ls^-_{\de,j}}(z^-_j)}\hat
b^-_j\left(V_{\de,\hat b^-_j,\hat q^-_{\de,j},z^-_j}-\hat
q_{\de,j}^-+O\left(\frac{\ln^2|\ln\ep|}{|\ln\ep|^2}\right)\right)_+^p+O\left(\frac{\ep}{|\ln\ep|^{p+2}}\right)&\\
\qquad\qquad\displaystyle\,=\sum_{i=1}^m\frac{|\ln\ep|^p}{\ep^2}\int_{B_{s^+_{\de,i}}(z_i^+)}\hat
b_i^+\left(V_{\de,\hat b_i^+,\hat q_{\de,i}^+,z^+_i}-\hat
q_{\de,i}^+\right)_+^p&\\
\qquad\qquad\displaystyle\,\,\,-\sum_{j=1}^n\frac{|\ln\ep|^p}{\ep^2}\int_{B_{Ls^-_{\de,j}}(z^-_j)}\hat
b^-_j\left(V_{\de,\hat b^-_j,\hat q^-_{\de,j},z^-_j}-\hat
q_{\de,j}^-\right)_+^p+O\left(\frac{\ln^2|\ln\ep|}{|\ln\ep|}\right)&\\
\qquad\qquad\displaystyle\,=\sum_{i=1}^m\frac{2\pi\hat
q^+_{\de,i}}{\hat
b^+_i}\frac{|\ln\ep|}{\ln\frac{R}{s^+_{\de,i}}}-\sum_{j=1}^n\frac{2\pi\hat
q^-_{\de,j}}{\hat
b^-_j}\frac{|\ln\ep|}{\ln\frac{R}{s^-_{\de,j}}}+O\left(\frac{\ln^2|\ln\ep|}{|\ln\ep|}\right)&\\
\qquad\qquad\displaystyle\,\rightarrow\,\sum_{i=1}^m\frac{2\pi
q(\bar z^+_i)}{b(\bar z^+_i)}-\sum_{j=1}^n\frac{2\pi q(\bar
z_j^-)}{b(\bar z_j^-)}, \,\,\text{as}\,\ep\rightarrow0.&
\end{array}
\]
So, the result follows.
\end{proof}
\begin{rem}
For any given strictly local minimum points $\hat z^+_1,\cdots,\hat
z^+_m,\hat z^-_1,\cdots,\hat z^-_n$ of $\frac{\psi_0^2}{b}$ on the
boundary $\partial\Om$, we can also obtain the corresponding results
as in Theorem \ref{Th1.2}.
\end{rem}

\section{Technical Estimates}

In this section we will give precise expansions of
$I\left(V_{\de,Z}\right)$, which has been used in section 4.

 Let
\[
I(u)=\frac{\de^2}{2}\int_\Om\frac{1}{b}|\nabla
u|^2-\frac{1}{p+1}\int_\Om b(u-q)_+^{p+1}.
\]

Recall that
\[
V_{\de,Z,j}=PV_{_{\de,\hat b_{j},\hat
q_{\de,j},z_{j}}},\,\,V_{\de,Z}=\sum_{j=1}^m V_{\de,Z,j}.
\]

\begin{lem}\label{l5.1}
There is a large constant $L>0$ such that
\[
V_{\de,Z}(x)-q(x)<0,\,\,x\in\Om\backslash\cup_{j=1}^mB_{Ls_{\de,j}}(z_j).
\]
\end{lem}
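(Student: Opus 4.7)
The argument would rest on the explicit representation
\[
V_{\de,Z,j}(x) = \frac{\hat q_{\de,j}}{\ln R/s_{\de,j}}\,\bar G(x,z_j),\qquad \bar G(x,z_j)=\ln\frac{R}{|x-z_j|}-g(x,z_j),
\]
valid for $|x-z_j|\ge s_{\de,j}$, which makes the logarithmic decay of $V_{\de,Z,j}$ away from its centre transparent. The plan is to cover $\Om\setminus\cup_j B_{Ls_{\de,j}}(z_j)$ by a near region $N_i=B_\eta(z_i)\setminus B_{Ls_{\de,i}}(z_i)$ around each $z_i$ together with a far region $F=\Om\setminus\cup_j B_\eta(z_j)$, at an intermediate scale $\eta=|\ln\ep|^{-2}$. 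By the separation hypothesis \eqref{2.5} the $N_i$ are disjoint and every point of $F$ lies at distance $\ge\eta$ from each $z_j$ once $\ep$ is small.

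On $F$, each $\bar G(x,z_j)\le \ln R/\eta+|g(x,z_j)|=O(\ln|\ln\ep|)$, since $|g|$ is uniformly bounded on the relevant compact sets; combined with $\ln R/s_{\de,j}\sim|\ln\ep|$ and $\hat q_{\de,j}=O(1)$ this yields
\[
V_{\de,Z}(x)=\sum_{j=1}^m\frac{\hat q_{\de,j}}{\ln R/s_{\de,j}}\bar G(x,z_j)=O\!\left(\frac{\ln|\ln\ep|}{|\ln\ep|}\right)\longrightarrow 0,
\]
which is strictly below $\inf_\Om q>0\le q(x)$ for $\ep$ small.

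On $N_i$ the inequality $|x-z_i|\ge Ls_{\de,i}$ forces $\ln R/|x-z_i|\le \ln R/s_{\de,i}-\ln L$, so
\[
V_{\de,Z,i}(x)\le \hat q_{\de,i}-\frac{\hat q_{\de,i}(\ln L-|g(x,z_i)|)}{\ln R/s_{\de,i}}\le \hat q_{\de,i}-\frac{c\ln L}{|\ln\ep|}
\]
for some $c>0$, once $L$ has been fixed large enough to dominate the uniform bound on $|g|$. The separation \eqref{2.5} gives $|x-z_j|\ge\tfrac12|z_i-z_j|$ for $j\ne i$, whence $V_{\de,Z,j}(x)=O(1/|\ln\ep|)$; meanwhile \eqref{2.7} combined with $q\in C^2$ yields $\hat q_{\de,i}-q(z_i)=O(1/|\ln\ep|)$ and $q(x)-q(z_i)=O(\eta)$. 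Assembling these,
\[
q(x)-V_{\de,Z}(x)\ge \frac{c\ln L}{|\ln\ep|}-O\!\left(\frac{1}{|\ln\ep|}\right)-O(\eta)>0,
\]
after fixing $L$ so large that $c\ln L$ dominates the implicit constant in the $O(1/|\ln\ep|)$ term, and then taking $\ep$ small so that $\eta=|\ln\ep|^{-2}$ is negligible compared with $|\ln\ep|^{-1}$.

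The main technical point is the trade-off dictating the choice of $\eta$: it must decay fast enough that Taylor expanding $q$ at $z_i$ contributes an error $o(|\ln\ep|^{-1})$, matching the logarithmic gap $c\ln L/|\ln\ep|$, yet slowly enough that on $F$ the singular part $\ln R/|x-z_j|$ is only $o(|\ln\ep|)$. The exponent $\eta=|\ln\ep|^{-2}$ is the simplest choice satisfying both. In the boundary-point setting of Theorem~\ref{Th1.5}, $g$ grows like $\ln|\ln\ep|$ rather than being bounded, but the same scheme works provided $L$ is taken large relative to this growth, which is absorbed into the error budget.
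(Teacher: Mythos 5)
Your two-region decomposition and the far-field estimate are sound, and in the \emph{interior} configuration (first alternative of \eqref{2.5}, where $\text{dist}(z_j,\partial\Om)\ge\varrho$) the near-annulus argument also goes through: there $|g(x,z_i)|$, $|\ln\ep|\cdot|\hat q_{\de,i}-q(z_i)|$ and the cross terms $V_{\de,Z,j}(x)$, $j\ne i$, are all bounded by constants depending only on $\varrho$, so a sufficiently large fixed $L$ beats them. This is close in spirit to the paper's proof, which splits instead at the scale $s_{\de,j}^{\sigma}$ (so the far region gives $V_{\de,Z}\le\sigma\sum_j\hat q_{\de,j}(1+o(1))$, small for $\sigma$ small, rather than $o(1)$) and uses \eqref{2.9} on the annulus.

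The genuine gap is the \emph{boundary} configuration (second alternative of \eqref{2.5}), which the lemma must cover because it is invoked in the proof of Theorem~\ref{Th1.5}. There $\text{dist}(z_i,\partial\Om)$ may be as small as $|\ln\ep|^{-\alpha}$, so $g(z_i,z_i)\sim\alpha\ln|\ln\ep|$ is unbounded, and your error budget $O(|g|/|\ln\ep|)+|\hat q_{\de,i}-q(z_i)|+\sum_{j\ne i}V_{\de,Z,j}(x)$ is of size $\ln|\ln\ep|/|\ln\ep|$, which no fixed $L$ can dominate through $c\ln L/|\ln\ep|$; your closing remark that ``$L$ is taken large relative to this growth'' is not available, since $L$ must be a constant while $\ln|\ln\ep|\to\infty$. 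The missing idea is that these large terms \emph{cancel} rather than merely being bounded: the system \eqref{2.7} defining $\hat q_{\de,i}$ is rigged precisely so that the term $-\frac{\hat q_{\de,i}}{\ln(R/s_{\de,i})}g(x,z_i)$ coming from the projection and the terms $\frac{\hat q_{\de,j}}{\ln(R/s_{\de,j})}\bar G(x,z_j)$ coming from the other bumps are absorbed into $\hat q_{\de,i}-q(z_i)$ up to $O\bigl(\tfrac{\ln^2|\ln\ep|}{|\ln\ep|^2}\bigr)=o(1/|\ln\ep|)$; this is exactly \eqref{2.9}, which the paper applies on $Ls_{\de,i}\le|x-z_i|\le s_{\de,i}^{\sigma}$ and which makes the conclusion uniform in both configurations. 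Your bound $V_{\de,Z,i}(x)\le\hat q_{\de,i}-\frac{\hat q_{\de,i}(\ln L-|g(x,z_i)|)}{\ln(R/s_{\de,i})}$ discards this cancellation by replacing $-g$ with $+|g|$, and so cannot be repaired by any choice of $L$ in the boundary case.
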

\begin{proof}
The proof is similar to Lemma A.1 in \cite{DY}. For reader's
convenience, we give a sketch here.

If $\sigma>0$ is small and $|x-z_j|\geq
s_{\de,j}^\sigma,\,j=1,\cdots,m$,
\[
\begin{split}
V_{\de,Z}-q(x)&=\sum_{j=1}^m\left(V_{\de,\hat b_j,\hat
q_{\de,j},z_j}-\frac{\hat
q_{\de,j}}{\ln\frac{R}{s_{\de,j}}}g(x,z_j)\right)-q(x)\\
&\leq\sum_{j=1}^m\frac{\hat
q_{\de,j}\ln\frac{R}{s_{\de,j}^\sigma}}{\ln\frac{R}{s_{\de,j}}}-\hat c\\
&\leq\sum_{j=1}^m\hat q_{\de,j}\sigma(1+o(1))-\hat c\\
&<0.
\end{split}
\]

If  $Ls_{\de,j}\leq |x-z_j|\leq s_{\de,j}^\sigma$, then  it follows
from \eqref{2.9} that
\[
\begin{split}
V_{\de,Z}-q(x)&=V_{\de,\hat b_i,\hat q_{\de,i},z_i}-\hat
q_{\de,i}+O\left(\frac{\ln|\ln\ep|}{|\ln\ep|^2}g(z_i,z_i)\right)\\
&\leq V_{\de,\hat b_i,\hat q_{\de,i}}(Ls_{\de,i})-\hat
q_{\de,i}+O\left(\frac{\ln^2|\ln\ep|}{|\ln\ep|^2}\right)\\
&=-\frac{\hat q_{\de,i}\ln
L}{\ln\frac{R}{s_{\de,i}}}+O\left(\frac{\ln^2|\ln\ep|}{|\ln\ep|^2}\right)\\
&<0.
\end{split}
\]
\end{proof}

\begin{prop}\label{p5.2}
We have
\[
\begin{split}
I(V_{\de,Z})=\sum_{j=1}^n\frac{\pi\de^2}{\ln\frac{R}{\ep}}\frac{q^2(z_j)}{b(z_j)}\left(1+\frac{g(z_j,z_j)}{\ln\frac{R}{\ep}}\right)
+O\left(\frac{\de^2\ln|\ln\ep|}{|\ln\ep|^2}\right).
\end{split}
\]
\end{prop}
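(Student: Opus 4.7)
The plan is to decompose $I(V_{\de,Z})=\tfrac{\de^2}{2}K-\tfrac{1}{p+1}P$ with $K=\int_\Om|\nabla V_{\de,Z}|^2/b$ and $P=\int_\Om b(V_{\de,Z}-q)_+^{p+1}$, evaluate each piece by localizing around the points $z_i$, and combine the resulting expressions using the linear system \eqref{2.7}. For $P$, Lemma \ref{l5.1} confines the integrand to $\cup_{i=1}^m B_{Ls_{\de,i}}(z_i)$; on each such ball \eqref{2.9} gives $V_{\de,Z}-q=A_i+O(\ln|\ln\ep|/|\ln\ep|^2)$ with $A_i:=V_{\de,\hat b_i,\hat q_{\de,i},z_i}-\hat q_{\de,i}$, while $b(x)=\hat b_i+O(s_{\de,i})$. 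The rescaling $y=(x-z_i)/s_{\de,i}$, the profile formula \eqref{2.4}, and the Pohozaev identity $\int_{B_1(0)}\phi^{p+1}=\pi(p+1)|\phi'(1)|^2/2$ together with the defining relation for $s_{\de,i}$ convert $\tfrac{1}{p+1}P$ into an explicit sum of quantities of size $O(\de^2/|\ln\ep|^2)$, which are absorbed into the error term.

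For $K$, I would write $K=\sum_{i,j}K_{ij}$ with $K_{ij}=\int_\Om\nabla V_{\de,Z,i}\cdot\nabla V_{\de,Z,j}/b$, and integrate by parts using $V_{\de,Z,j}/b=0$ on $\partial\Om$:
\[
\de^2 K_{ij}=\hat b_i^2\int_\Om\frac{V_{\de,Z,j}}{b}\,A_i^p\;-\;\de^2\int_\Om V_{\de,Z,j}\,\nabla V_{\de,Z,i}\cdot\nabla\bigl(\tfrac{1}{b}\bigr).
\]
Outside the bubble, $V_{\de,Z,i}(x)=\hat q_{\de,i}\bar G(x,z_i)/\ln(R/s_{\de,i})=O(1/|\ln\ep|)$, so a further integration by parts on the $\nabla(1/b)$ term yields $\tfrac{\de^2}{2}\int V_{\de,Z,i}V_{\de,Z,j}\Delta(1/b)+(\text{boundary})=O(\de^2/|\ln\ep|^2)$. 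The first integral concentrates on $B_{s_{\de,i}}(z_i)$. There I substitute: for $j=i$, the decomposition $V_{\de,Z,i}=A_i+\hat q_{\de,i}-\hat q_{\de,i}g(x,z_i)/\ln(R/s_{\de,i})$ together with $g(x,z_i)=g(z_i,z_i)+O(s_{\de,i})$ and $b(x)=\hat b_i+O(s_{\de,i})$, using $\int_{B_1(0)}\phi^p=2\pi|\phi'(1)|$; for $j\neq i$, the estimate $V_{\de,Z,j}(x)=\hat q_{\de,j}\bar G(z_i,z_j)/\ln(R/s_{\de,j})+O(s_{\de,i})$, which follows from the projection formula defining $V_{\de,Z,j}$ and smoothness of $\bar G$ away from the diagonal.

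Adding these contributions and factoring $2\pi\de^2\hat q_{\de,i}/(\hat b_i\ln(R/\ep))$ out of all terms whose source is $A_i^p$, the remaining bracket is
\[
\hat q_{\de,i}-\frac{\hat q_{\de,i}g(z_i,z_i)}{\ln(R/\ep)}+\sum_{j\neq i}\frac{\hat q_{\de,j}\bar G(z_i,z_j)}{\ln(R/\ep)},
\]
which equals $q(z_i)$ by \eqref{2.7}. Multiplying by $\tfrac{1}{2}$ (the coefficient in $I$), using $\hat q_{\de,i}=q(z_i)(1+g(z_i,z_i)/\ln(R/\ep))+O(1/|\ln\ep|^2)$ obtained by solving \eqref{2.7} perturbatively, and $\hat b_i=b(z_i)$, produces the announced leading term $\pi\de^2 q^2(z_i)(1+g(z_i,z_i)/\ln(R/\ep))/(b(z_i)\ln(R/\ep))$. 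The main obstacle will be to verify uniformly that all side terms---the $\nabla(1/b)$ correction to $K$, the boundary layer $s_{\de,i}\le|x-z_i|\le Ls_{\de,i}$ where the profile changes from $\phi$ to its logarithmic tail, and the Taylor residuals arising from replacing $b$, $g(\cdot,z_i)$, and $\bar G(\cdot,z_j)$ by their values at $z_i$---are of order $O(\de^2\ln|\ln\ep|/|\ln\ep|^2)$; once these bounds are in hand the stated expansion follows.
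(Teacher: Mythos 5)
Your proposal follows essentially the same route as the paper's proof: integrate by parts using the defining equation \eqref{2.50} for the projected profiles, localize via Lemma~\ref{l5.1} and \eqref{2.9}, evaluate the concentrated integrals through the rescaled profile and the Pohozaev identities, and recombine the diagonal $g$-term with the off-diagonal $\bar G$-terms through the linear system \eqref{2.7}. The only real deviation is your second integration by parts producing $\Delta\bigl(\tfrac{1}{b}\bigr)$, which requires $b\in C^2$ whereas only $b\in C^1(\bar\Om)$ is assumed; the paper instead bounds the $\tfrac{1}{b}-\tfrac{1}{\hat b_j}$ correction directly using $\bigl|\tfrac{1}{b}-\tfrac{1}{\hat b_j}\bigr|=O(|x-z_j|)$ together with the gradient formula \eqref{2.10}, and your argument should be adjusted accordingly (a direct estimate of $\de^2\int V_{\de,Z,j}\nabla V_{\de,Z,i}\cdot\nabla\tfrac{1}{b}$ also gives $O(\de^2/|\ln\ep|^2)$ without extra regularity).
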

\begin{proof}
Taking advantage of \eqref{2.50}, we find that
\[
\begin{split}
&\de^2\int_\Om\frac{1}{b}|\nabla V_{\de,Z}|^2\\
=&\sum_{j=1}^m\int_\Om\hat b_j(V_{\de,\hat b_j,\hat
q_{\de,j},z_j}-\hat q_{\de,j})_+^pV_{\de,Z,j}+\sum_{j\neq
i}^m\int_\Om\hat b_j(V_{\de,\hat
b_j,\hat q_{\de,j},z_j}-\hat q_{\de,j})_+^pV_{\de,Z,i}\\
&+\sum_{j=1}^m\de^2\int_\Om\left(\frac{1}{b}-\frac{1}{\hat
b_j}\right)|\nabla V_{\de,Z,j}|^2+\sum_{j\neq
i}^m\de^2\int_\Om\left(\frac{1}{b}-\frac{1}{\hat b_j}\right)\nabla
V_{\de,Z,i}\nabla V_{\de,Z,j}.
\end{split}
\]

First, we estimate
\[
\begin{split}
&\hat b_j\int_\Om(V_{\de,\hat b_j,\hat q_{\de,j},z_j}-\hat
q_{\de,j})_+^pV_{\de,Z,j}\\
=&\hat b_j\hat q_{\de,j}\int_\Om(V_{\de,\hat b_j,\hat
q_{\de,j},z_j}-\hat q_{\de,j})_+^p+\hat b_j \int_\Om(V_{\de,\hat
b_j,\hat q_{\de,j},z_j}-\hat
q_{\de,j})_+^{p+1}\\
&-\frac{\hat b_j\hat
q_{\de,j}}{\ln\frac{R}{s_{\de,j}}}\int_\Om(V_{\de,\hat b_j,\hat
q_{\de,j},z_j}-\hat q_{\de,j})_+^p\\
=&\hat b_j\hat q_{\de,j}s_{\de,j}^2\hat
b_j^{-\frac{2p}{p-1}}\left(\frac{\de}{s_{\de,j}}\right)^{\frac{2p}{p-1}}\int_{B_{1}(0)}\phi^p
+\hat b_js_{\de,j}^2\hat
b_j^{-\frac{2(p+1)}{p-1}}\left(\frac{\de}{s_{\de,j}}\right)^{\frac{2(p+1)}{p-1}}\int_{B_1(0)}\phi^{p+1}\\
&-\frac{\hat b_j\hat
q_{\de,j}s_{\de,j}^2}{\ln\frac{R}{s_{\de,j}}}\hat
b_j^{-\frac{2p}{p-1}}\left(\frac{\de}{s_{\de,j}}\right)^{\frac{2p}{p-1}}\int_{B_1(0)}\phi^p(x)g(z_j+s_{\de,j}x,z_j)\\
=&\frac{2\pi\de^2}{\ln\frac{R}{s_{\de,j}}}\frac{\hat
q_{\de,j}^2}{\hat
b_j}+\frac{(p+1)\pi\de^2}{2|\ln\frac{R}{s_{\de,j}}|^2}\frac{\hat
q_{\de,j}^2}{\hat
b_j}-\frac{2\pi\de^2g(z_j,z_j)}{|\ln\frac{R}{s_{\de,j}}|^2}\frac{\hat
q_{\de,j}^2}{\hat b_j}+O\left(\frac{|\nabla
g(z_j,z_j)|s_{\de,j}^3}{|\ln\ep|^{p+1}}\right).
\end{split}
\]

Next, for $j\neq i$,
\[
\begin{split}
&\hat b_j\int_\Om(V_{\de,\hat b_j,\hat q_{\de,j},z_j}-\hat
q_{\de,j})_+^pV_{\de,Z,i}\\
=&\frac{\hat b_j\hat
q_{\de,i}}{\ln\frac{R}{s_{\de,i}}}\int_{B_{s_{\de,j}}(z_j)}(V_{\de,\hat
b_j,\hat q_{\de,j},z_j}-\hat q_{\de,j})_+^p\bar G(x,z_i)\\
=&\frac{\hat b_j\hat
q_{\de,i}}{\ln\frac{R}{s_{\de,i}}}s_{\de,j}^2\hat
b_j^{-\frac{2p}{p-1}}\left(\frac{\de}{s_{\de,j}}\right)^{\frac{2p}{p-1}}\int_{B_1(0)}\phi^p(x)\bar
G(z_j+s_{\de,j}x,z_i)\\
=&\frac{2\pi\de^2\hat q_{\de,i}\hat q_{\de,j}\bar G(z_i,z_j)}{\hat
b_j\ln\frac{R}{s_{\de,i}}\ln\frac{R}{s_{\de,j}}}+O\left(\frac{|\nabla\bar
G(z_i,z_j)|s_{\de,j}^3}{|\ln\ep|^{p+1}}\right).
\end{split}
\]
Note that \eqref{2.10} and $|\frac{1}{b}-\frac{1}{\hat
b_j}|=O(|x-z_j|)$, we can obtain that
\[
\begin{split}
&\de^2\int_\Om\left(\frac{1}{b}-\frac{1}{\hat b_j}\right)|\nabla
V_{\de,Z,j}|^2\\
=&\de^2\int_{B_{s_{\de,j}}(z_j)}\left(\frac{1}{b}-\frac{1}{\hat
b_j}\right)|\nabla V_{\de,Z,j}|^2+\de^2\int_{\Om\backslash
B_{s_{\de,j}}(z_j)}\left(\frac{1}{b}-\frac{1}{\hat
b_j}\right)|\nabla V_{\de,Z,j}|^2\\
=&O\left(\frac{\de^2}{|\ln\ep|^2}\right).
\end{split}
\]
Similarly,
\[
\de^2\int_\Om\left(\frac{1}{b}-\frac{1}{\hat b_j}\right)\nabla
V_{\de,Z,i}\nabla
V_{\de,Z,j}=O\left(\frac{\de^2}{|\ln\ep|^2}\right).
\]

By Lemma \ref{l5.1}, we have
\[
\begin{split}
\int_\Om
b(V_{\de,Z}-q)_+^{p+1}&=\sum_{j=1}^m\int_{B_{Ls_{\de,j}}(z_j)}b\left(
V_{\de,\hat b_i,\hat q_{\de,i},z_i}(x)-\hat
 q_{\de,i}+O\left(\frac{\ln^2|\ln\ep|}{|\ln\ep|^2}\right)\right)_+^{p+1}\\
&=O\left(\frac{\de^2}{|\ln\ep|^2}\right).
\end{split}
\]
Thus, we find
\[
\begin{split}
I(V_{\de,Z})=\sum_{j=1}^m\frac{\pi\de^2}{\ln\frac{R}{s_{\de,j}}}\frac{\hat
q_{\de,j}^2}{\hat
b_j}-\sum_{j=1}^m\frac{\pi\de^2g(z_j,z_j)}{|\ln\frac{R}{s_{\de,j}}|^2}\frac{\hat
q_{\de,j}^2}{\hat b_j}+\sum_{j\neq i}^m\frac{\pi\de^2\hat
q_{\de,i}\hat
q_{\de,j}}{b_j\ln\frac{R}{s_{\de,i}}\ln\frac{R}{s_{\de,j}}}\bar
G(z_i,z_j)+O\left(\frac{\de^2}{|\ln\ep|^2}\right).
\end{split}
\]
The result follows from $\hat b_j=b(z_j)$ and the fact that
\[
\hat
q_{\de,i}=q(z_i)\left(1+\frac{g(z_i,z_i)}{\ln\frac{R}{\ep}}+O\left(\frac{1}{|\ln\ep|}\right)\right)
,\, i=1,\cdots,m.
\]

\end{proof}

\bibliographystyle{plain}

\bibliography{paper}

\begin{thebibliography}{9}
% ``9'' specifies that the bibliography
% has just one-digit reference numbers.
\bibitem{B}
G.K. Batchelor,  An introduction to Fluid Dynamics,
\textit{Cambridge University Press}, (1967).




\bibitem{BF}
M.S. Berger and L.E. Fraenkel,  Nonlinear desingularization in
certain free-boundary problems, \textit{Comm. Math. Phys.},
77(1980), 149--172.

\bibitem{CHL1}
R. Camassa, D.D. Holm and  C.D. Levermore, Long-time effects of
bottom topography in shallow water, \textit{Phys. D}, 98(1996),
258--286.

\bibitem{CHL2}
R. Camassa, D.D. Holm and  C.D. Levermore, Long-time shallow-water
equations with a varying bottom, \textit{J. Fluid Mech.}, 349(1997),
173--189.


\bibitem{CLW1} D. Cao, Z. Liu and  J. Wei, Regularization of point vortices for the Euler equation
in dimension two. arXiv:1208.3002.

\bibitem{CLW2}  D. Cao, Z. Liu and  J. Wei, Regularization of point vortices for the Euler equation
in dimension two, part II. arXiv:1208.5540.

\bibitem{CPY}
D. Cao, S. Peng and S. Yan,  Multiplicity of solutions for the
plasma problem in two dimensions,\textit{ Adv. Math.}, 225(2010),
2741--2785.



\bibitem{DY}
E.N. Dancer and  S. Yan, The Lazer-McKenna conjecture and a free
boundary problem in two dimensions, \textit{J. London Math. Soc.},
78(2008), 639--662.

\bibitem{DV} S. De Valeriola and  J. Van Schaftingen, Desingularization of vortex rings and shallow water
vortices by semilinear elliptic problem. arXiv:1209.3988.

\bibitem{FB}
L.E. Fraenkel and M.S. Berger,    A global theory of steady vortex
rings in an ideal fluid, \textit{Acta Math.}, 132(1974), 13--51.



\bibitem{FW}
F. Flucher and J. Wei, Asymptotic shape and location of small cores
in elliptic free-boundary problems,\textit{ Math. Z.}, 228(1998),
638--703.

\bibitem{LYY}
G. Li, S. Yan and J. Yang, An elliptic problem related to planar
vortex pairs,
 \textit{SIAM J. Math. Anal.},  36(2005),   1444--1460.


\bibitem{LP}
Y. Li and  S. Peng, Multiple solutions for an elliptic problem
related to vortex pairs,
  \textit{J. Diff. Equat.},  250(2011),   3448--3472.



\bibitem{Lin}

C.C. Lin, On the motion of vortices in two dimension -- I. Existence
of the Kirchhoff-Routh function, \textit{Proc. Natl. Acad. Sci.
USA}, 27(1941), 570--575.




\bibitem{R}
G. Richardson, Vortex motion in shallow water with varying bottom
topography and zero Froude number,
 \textit{ J. Fluid Mech.},  411(2000),   351--374.

\bibitem{SV}
D. Smets and  J. Van Schaftingen, Desingulariation of vortices for
the Euler equation,
 \textit{ Arch. Rational Mech. Anal.},  198(2010),   869--925.

\bibitem{WYZ1}
J. Wei, D. Ye and  F. Zhou, Bubbling solutions for an anisotropic
Emden-Fowler equation,
 \textit{ C. R. Acad. Sci. Pairs, Sect. I},  343(2006),   253--258.

 \bibitem{WYZ2}
J. Wei, D. Ye and  F. Zhou, Bubbling solutions for an anisotropic
Emden-Fowler equation,
 \textit{Calc. Var. Partial Differential Equations},  28(2007),   217--247.

 \bibitem{WYZ3}
J. Wei, D. Ye and  F. Zhou, Analysis of boundary bubbling solutions
for an anisotropic Emden-Fowler equation,
 \textit{Ann. Inst. Poincar\'{e} Anal. Non Lin\'{e}aire},  25(2008),   425--447.


\bibitem{Y1}
J. Yang, Existence and asymptotic behavior in planar vortex theory,
\emph{Math. Models Methods Appl. Sci.},   1(1991), 461--475.

\bibitem{Y2}
J. Yang, Global vortex rings and asymptotic behaviour,
 \textit{ Nonlinear Anal.},  25(1995),   531--546.

\end{thebibliography}

\end{document}